\documentclass[12 pt]{article}
\usepackage[letterpaper,margin=1in]{geometry}
\usepackage[utf8]{inputenc}
\usepackage[english]{babel}
\usepackage{enumitem}
\usepackage{ifthen}
\usepackage{subcaption}
\usepackage[pagewise]{lineno} 
\usepackage[table]{xcolor}
\usepackage[colorlinks=true,citecolor=blue,linkcolor=blue, urlcolor=gray]{hyperref}
\usepackage{fontawesome5}
\usepackage{xparse}       
\usepackage{etoolbox}     
\usepackage{authblk}      
\usepackage{xspace}
\usepackage{multirow}
\usepackage[font=small]{caption}

\makeatletter
\let\@authorline\@empty    
\let\@affilblock\@empty    
\newcommand{\emailicon}[1]{\href{mailto:#1}{\textsuperscript{\faEnvelope[regular]}}}
\newcommand{\orcidicon}[1]{\href{https://orcid.org/#1}{\textsuperscript{\faOrcid}}}
\newcommand{\homepage}[1]{\href{#1}{\textsuperscript{\faHome}}}
\newcommand{\authorentry}[5]{%
  \ifx\@authorline\@empty
  \else
    \g@addto@macro\@authorline{,\ }%
  \fi
  \g@addto@macro\@authorline{%
    #1%
    \ifx&#2&\else\textsuperscript{#2}\fi%
    \ifx&#3&\else\emailicon{#3}\fi%
    \ifx&#4&\else\orcidicon{#4}\fi%
    \ifx&#5&\else\homepage{#5}\fi%
  }%
}
\newcommand{\affilentry}[2]{%
  \g@addto@macro\@affilblock{%
    \textsuperscript{#1}#2\\%
  }%
}
\renewcommand{\maketitle}{%
  \begin{center}
    {\LARGE \bfseries \@title \par}
    \vskip 1em
    {\normalsize
      \@authorline
    }
    \vskip 1em
    {\small \@affilblock}
    \vskip 1em
    {\small \@date \par}
  \end{center}
  \vskip 2em
}
\makeatother


\usepackage{amsmath}
\usepackage{amsfonts}
\usepackage{amssymb}
\usepackage{amsthm}
\usepackage{thmtools}
\usepackage{complexity}
\usepackage[capitalize,nameinlink]{cleveref} 
\crefname{equation}{Equation}{Equations}
\crefname{figure}{Figure}{Figures}
\newtheorem{theorem}{Theorem}[section]

\newtheorem{lemma}[theorem]{Lemma}
\newtheorem{corollary}[theorem]{Corollary}

\newtheorem{definition}[theorem]{Definition}

\usepackage{float}
\floatplacement{figure}{H}

\usepackage{tikz}
\usetikzlibrary{math}
\usetikzlibrary{calc}
\usetikzlibrary{positioning}
\tikzstyle{vtx}=[circle, draw, fill=black, inner sep=0pt, minimum width=5pt]
\tikzstyle{vtx-white}=[circle, draw, fill=white, inner sep=0pt, minimum width=5pt]

\newcommand{\nbc}{neighborhood balanced coloring\xspace}

\newcommand{\nbcd}{neighborhood balanced colored\xspace}
\newcommand{\qnbc}{quasi neighborhood balanced coloring\xspace}

\newcommand{\qnbcd}{quasi neighborhood balanced colored\xspace}

\title{Quasi Neighborhood Balanced Coloring of Graphs}

\authorentry{Maurice Genevieva Almeida}{}{p20230078@goa.bits-pilani.ac.in}{}{}

\affilentry{}{Department of Mathematics,\\ Rosary College of Commerce and Arts, Navelim, Goa, India.}

\bibliographystyle{plainurl}

\begin{document}

\date{}
\maketitle
\begin{abstract}

For a simple graph $G = (V, E)$, a coloring of vertices of $G$ using two colors, say red and blue, is called a \emph{\qnbc} if, for every vertex of the graph, the number of red neighbors and the number of blue neighbors differ by at most one. In addition, there must be at least one vertex in $G$ for which this difference is exactly one.
If a graph $G$ admits such a coloring, then $G$ is said to be a \emph{quasi Neighborhood balanced colored graph}. We also define variants of such a coloring, like uniform \qnbc, positive \qnbc and negative \qnbc based on the color of the extra neighbor of every vertex of odd degree of the graph $G$. 
We present several examples of graph classes that admit the various variants of \qnbc. 
We also discuss various graph operations involving such graphs. 
Furthermore, we prove that there is no forbidden subgraph characterization for the class of \qnbc and show that the problem of determining whether a given graph has such a coloring is \NP-complete.
 \end{abstract}

\noindent \textbf{2020 Mathematics Subject Classification:} 05C 15. \\
\textbf{Keywords:} graph coloring, \qnbc, \NP-completeness.

\section{Introduction}
Let $G=(V,E)$ be a simple graph.
For any vertex $v\in V$, define an \textit{open neighborhood} of $v$ as a set $N(v):=\{u: uv\in E\}$.
The members of $N(v)$ are called the \textit{neighbors} of $v$ and the cardinality of $N(v)$ is called the \emph{degree} of $v$, denoted as $\deg(v)$.
Further, if we add the vertex $v$ to its open neighborhood $N(v)$, then we get the closed neighborhood of $v$, denoted by $N[v]$.
For graph-theoretic notation, we refer to Chartrand and Lesniak \cite{chart}.


Freyberg et al. \cite{nbc} introduced the concept `neighborhood balanced coloring'.
A \emph{neighborhood balanced coloring} of a graph $G$ is a vertex coloring of $G$ using two colors, say red and blue, such that each vertex has an equal number of neighbors of both colors.
This notion of coloring is somewhat similar to \emph{cordial labeling} \cite{cordial}.
In \cite{nbc}, the authors presented several necessary/sufficient conditions for a graph to admit neighborhood balanced coloring. They further presented several graph classes that admit neighborhood balanced coloring.

Minyard et al. \cite{3nbc} introduced the concept of `neighborhood balanced $3$-coloring' of a graph and gave a characterization of several classes of graphs that admit such a coloring.
A graph is said to admit a \emph{neighborhood balanced $3$-coloring} if the vertices of $G$ can be colored using exactly three colors such that each vertex has an equal number of neighbors of all three colors.

Almeida et al. \cite{knbc} generalized this concept by introducing the notion of neighborhood balanced $k$-coloring of graphs, for any positive integer $k \ge 2$. A coloring of the vertices of $G$ using $k$ colors such that every vertex has an equal number of vertices of each color in its neighborhood is called neighborhood balanced $k$-coloring, and the graph is called neighborhood balanced $k$-colored graph. They presented several classes of graphs that admit such a coloring and also studied neighborhood balanced $k$-colored graphs under various graph operations. They also proved that the problem of checking if a given graph $G$ admits neighborhood balanced $k$-coloring  is \NP-complete. Moreover, they also showed that the class of neighborhood balanced $k$-colored graphs is not hereditary.

Motivated by the notion of neighborhood balanced colorings of graphs, 
Collins et al.~\cite{collin-cnbc} introduced the concept of 
\emph{closed neighborhood balanced coloring}. 
In this setting, the vertices of a graph are colored with two colors, say red and blue, such that in the closed neighborhood of every vertex, the number of red vertices equals the number of blue vertices. 
Collins et al.~\cite{collin-cnbc} established several necessary/sufficient conditions for a graph to admit a closed neighborhood balanced coloring. 
They identified various classes of graphs that admit such a coloring and further investigated the behavior of closed neighborhood balanced colored 
graphs under different graph operations. Moreover, they demonstrated that the class of closed neighborhood balanced colored graphs is not hereditary. Almeida et al. \cite{kcnbc} further generalized this concept by introducing the notion of closed neighborhood balanced $k$-coloring of graphs, for any positive integer $k \ge 2$.

After examining these concepts, a natural question arises: \emph{What if a graph can be colored with two colors—red and blue—such that, for every vertex, the difference between the number of red and blue neighbors is at most one?} This question was earlier raised in \cite{nbc}.

In this article, we explore this question in detail and introduce a new vertex-coloring framework in which each vertex is assigned one of two colors, red or blue, under the condition that the absolute difference between the number of red and blue neighbors of every vertex is at most one. A formal definition of this coloring is presented below.

\begin{definition}\label{def:nnbc}
    Let $G$ be a graph. We color the vertices of $G$ using two colors, red and blue. Such a coloring is called a \emph{quasi neighborhood  balanced coloring} if, for every vertex $v$ in the graph, the number of red neighbors and the number of blue neighbors differ by at most one. In addition, there must be at least one vertex in $G$ for which this difference is exactly one.
If a graph $G$ admits such a coloring, then $G$ is said to be a \emph{quasi neighborhood  balanced colored graph}.
\end{definition}
Note that for a \qnbcd graph the vertices of even degree will have an equal number of neighbors of both colors.

Based on the color of the additional neighbor that some vertices have under a \qnbc, we introduce several variants of the quasi neighborhood  balanced coloring.

\begin{definition}\label{def:uniform_nnbc}
A quasi neighborhood  balanced coloring of a graph \(G\) is said to be a \emph{uniform quasi neighborhood  balanced coloring} if, for every vertex of odd degree, the number of red neighbors is exactly one more than the number of blue neighbors. If a graph $G$ admits uniform \qnbc then it is called uniform \qnbcd graph.  The color red is called \emph{dominating color} of the \qnbc.
\end{definition}
Note that in Definition \ref{def:uniform_nnbc}, the role of colors red and blue can be interchanged.\par 
\begin{definition}
A quasi neighborhood  balanced coloring of a graph $G$ is said to be \emph{positive} if every vertex of odd degree has exactly one more neighbor of its own color than of the opposite color.

Similarly, a quasi neighborhood  balanced coloring of graph $G$ is said to be \emph{negative} if every vertex of odd degree has exactly one more neighbor of the opposite color than of its own color.
\end{definition}
Note that a negative quasi neighborhood  balanced coloring of a graph having all vertices of odd degree is also its closed neighborhood balanced coloring.

In Section \ref{sec:results}, we present several examples of graphs that admit the various variants of \qnbc. We also study such graphs under various graph operations. In Section \ref{sec:hardness}, we prove that the decision problem: {\it Given a graph $G$, does it admit a \qnbc?} is \NP-complete. 

At the beginning, we fix the following notation, which we shall use throughout the article.
If $c$ is a \qnbc of $G$, then denote by $V_G^c(R)$ and $V_G^c(B)$  the number of vertices of $G$ colored red and blue respectively by the coloring $c$. When the coloring or the graph under consideration is clear from the context, we shall omit the subscript or the superscript from the above notations.

 \section{Results} \label{sec:results}

The following lemma follows trivially from Definition \ref{def:nnbc}.
\begin{lemma}\label{lem:deg_nnbc}
    A \qnbc graph has at least one vertex of odd degree. 
\end{lemma}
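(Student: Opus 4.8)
The plan is to exploit a simple parity observation: for any vertex $v$, if $r(v)$ and $b(v)$ denote the numbers of red and blue neighbors of $v$, then $r(v)+b(v)=\deg(v)$, so the sum and the difference $r(v)-b(v)$ have the same parity. Consequently, whenever $\deg(v)$ is even, $r(v)-b(v)$ is even, and combined with the \qnbc constraint $|r(v)-b(v)|\le 1$ this forces $r(v)-b(v)=0$. In other words, the ``extra'' neighbor guaranteed by the second part of the definition of a \qnbc can only occur at a vertex of odd degree.

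Concretely, I would start by invoking Definition~\ref{def:nnbc}: let $G$ be a \qnbc graph with a fixed \qnbc using colors red and blue, and let $v\in V(G)$ be a vertex for which $|r(v)-b(v)|=1$ (such a vertex exists by definition). Then I would write $\deg(v)=r(v)+b(v)$ and note that $r(v)+b(v)\equiv r(v)-b(v)\pmod 2$, so $\deg(v)$ is odd. This exhibits a vertex of odd degree in $G$, completing the argument.

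I do not anticipate any real obstacle here; the only thing to be careful about is phrasing the parity step cleanly and making explicit that the equality case $|r(v)-b(v)|=1$ in the definition is what is being used (as opposed to the weaker ``at most one'' condition, which by itself would not force an odd-degree vertex — e.g.\ a properly $2$-colorable regular bipartite-like situation). Since $1$ is odd, the congruence $\deg(v)\equiv 1\pmod 2$ is immediate, and no case analysis on the coloring is needed.
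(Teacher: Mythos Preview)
Your argument is correct and is exactly the trivial parity observation the paper has in mind: the paper does not spell out a proof but simply notes that the lemma ``follows trivially from Definition~\ref{def:nnbc},'' and your one-line computation $\deg(v)=r(v)+b(v)\equiv r(v)-b(v)=\pm1\pmod 2$ at the guaranteed imbalanced vertex is precisely that triviality made explicit.
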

\subsection{Counting Results}

Let $G$ be a graph that admits a quasi neighborhood  balanced coloring  $c$.  
Let $V(R)$ and $V(B)$ denote the sets of vertices of $G$ colored red and blue, respectively by $c$.  
Each of these sets is further partitioned according to the neighborhood imbalance:

\begin{itemize}
    \item $V(R_1)$: red vertices having more red than blue neighbors;
    \item $V(R_2)$: red vertices having more blue than red neighbors;
    \item $V(B_1)$: blue vertices having more blue than red neighbors;
    \item $V(B_2)$: blue vertices having more red than blue neighbors.
\end{itemize}

Consider the bipartite subgraph $H$ induced by the edges between $V(R)$ and $V(B)$.
The number of edges of $H$ satisfies
\[
|E(H)| = \sum_{v\in V(R)} \deg_H(v)
       = \sum_{v\in V(B)} \deg_H(v).
\]

Using the \qnbc\ condition, we obtain the following expression for $|E(H)|$:
\begin{align}\label{eq:counting1}
    |E(H)|
    &= \sum_{v\in V(R_1)} \frac{\deg_G(v)-1}{2}
     + \sum_{v\in V(R_2)} \frac{\deg_G(v)+1}{2} \\[2mm]
    &= \sum_{v\in V(B_1)} \frac{\deg_G(v)-1}{2}
     + \sum_{v\in V(B_2)} \frac{\deg_G(v)+1}{2}. \notag
\end{align}

Rearranging~\eqref{eq:counting1}, we obtain
\begin{align}\label{eq:counting2}
    \sum_{v\in V(R)} \deg_G(v)
    + |V(R_2)| - |V(R_1)|
    =
    \sum_{v\in V(B)} \deg_G(v)
    + |V(B_2)| - |V(B_1)|.
\end{align}

Since
\[
2|E(G)| = \sum_{v\in V(R)}\deg_G(v) + \sum_{v\in V(B)}\deg_G(v),
\]
substituting \eqref{eq:counting2} yields
\begin{align*}
    2|E(G)|
    &= 2\sum_{v\in V(B)}\deg_G(v)
     + |V(B_2)| - |V(B_1)|
     + |V(R_1)| - |V(R_2)|.
\end{align*}

Equation~\eqref{eq:counting1} also gives
\[
2|E(H)| - |V(B_2)| + |V(B_1)| = \sum_{v\in V(B)}\deg_G(v).
\]

Substituting this expression into the previous equality, we obtain
\begin{align*}
    2|E(G)|
    &= 4|E(H)|
     - |V(B_2)| + |V(B_1)|
     - |V(R_2)| + |V(R_1)| \\
    &= 4|E(H)| - |V(B_2)\cup V(R_2)| + |V(B_1)\cup V(R_1)|.
\end{align*}
Thus, $$|E(H)|=\frac{2|E(G)|-r+s}{4},$$ where $r=|V(R_2)\cup V(B_2)|$ and $s=|V(R_1)\cup V(B_1)|$.\\\\
Observe that:
\begin{itemize}
    \item For a \emph{positive} \qnbc, $r=0$ and $s$ equals the number of odd-degree vertices.
    \item For a \emph{negative} \qnbc, $s=0$ and $r$ equals the number of odd-degree vertices.
\end{itemize}

These calculations yield the following results.

\begin{theorem}
Let $G$ be a graph admitting a quasi neighborhood  balanced coloring.
\begin{enumerate}
    \item If $G$ admits a positive \qnbc, then
    \[
        2|E(G)| \equiv -k\ (\bmod\ {4}),
    \]
    \item If $G$ admits a negative \qnbc, then
    \[
        2|E(G)| \equiv k\ (\bmod\ {4}),
    \]
\end{enumerate}
 where $k$ denotes the number of vertices of odd degree.
\end{theorem}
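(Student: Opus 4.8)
The plan is to read the two congruences straight off the edge-count identity
$|E(H)|=\frac{2|E(G)|-r+s}{4}$
established just above the statement, using nothing more than the obvious fact that $|E(H)|$ is a nonnegative integer. First I would note that since $|E(H)|\in\mathbb{Z}_{\geq 0}$, the numerator $2|E(G)|-r+s$ is divisible by $4$; equivalently,
$2|E(G)|\equiv r-s \pmod{4}$.
This single divisibility statement is the engine of the whole theorem, and everything after it is bookkeeping about the values of $r=|V(R_2)\cup V(B_2)|$ and $s=|V(R_1)\cup V(B_1)|$ in the two variants.

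Next I would pin down $r$ and $s$ for a positive \qnbc. Every even-degree vertex has equally many red and blue neighbors, so it lies in none of the four classes $V(R_1),V(R_2),V(B_1),V(B_2)$ and contributes to neither $r$ nor $s$. For an odd-degree vertex the "extra" neighbor has the vertex's own color in a positive \qnbc, so each odd-degree red vertex has strictly more red than blue neighbors (hence lies in $V(R_1)$) and each odd-degree blue vertex has strictly more blue than red neighbors (hence lies in $V(B_1)$); this is exactly the observation recorded before the statement. Thus $V(R_2)=V(B_2)=\emptyset$ gives $r=0$, while $V(R_1)\cup V(B_1)$ is precisely the set of odd-degree vertices, giving $s=k$. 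Substituting into $2|E(G)|\equiv r-s\pmod 4$ yields $2|E(G)|\equiv -k\pmod 4$, which is part~(1).

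Part~(2) is the mirror image. In a negative \qnbc the extra neighbor of each odd-degree vertex has the opposite color, so odd-degree red vertices fall into $V(R_2)$ and odd-degree blue vertices into $V(B_2)$, whence $r=k$ and $s=0$; the same substitution then gives $2|E(G)|\equiv k\pmod 4$.

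As for difficulty: there is essentially no hard step once the formula for $|E(H)|$ is in hand. The only point requiring care is the case analysis that places each odd-degree vertex in the correct one of the four classes under each variant, and the confirmation that even-degree vertices are irrelevant to both $r$ and $s$ — but both of these are already granted by the observation in the excerpt. If I were at all worried about the derivation of $|E(H)|=(2|E(G)|-r+s)/4$ itself, I would sanity-check it on a small graph with odd-degree vertices (for instance $P_3$ or $C_3$) before leaning on it, but I expect the stated identity to be correct as written, so the proof should be little more than "clear denominators and read off the residue."
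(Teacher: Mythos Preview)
Your proposal is correct and is exactly the argument the paper intends: the theorem is stated immediately after the identity $|E(H)|=\tfrac{2|E(G)|-r+s}{4}$ and the observation about the values of $r,s$ in the positive and negative cases, with the sentence ``These calculations yield the following results'' in place of a formal proof. Your write-up just makes explicit the one-line step of clearing the denominator to get $2|E(G)|\equiv r-s\pmod 4$ and then substituting $(r,s)=(0,k)$ or $(k,0)$.
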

\begin{corollary}
    For a (odd) regular graph $G$ admitting a \qnbc,
   \begin{enumerate}
       \item If $G$ admits a positive \qnbc, then
    \[
        2|E(G)| \equiv -|V(G)|\ (\bmod\ {4}),
    \]
    \item If $G$ admits a negative \qnbc, then
    \[
        2|E(G)| \equiv |V(G)|\ (\bmod\ {4}).
    \]
   \end{enumerate}
\end{corollary}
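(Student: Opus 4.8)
The plan is to obtain this corollary as an immediate specialization of the preceding theorem, so the real content is just identifying the parameter $k$ for the class of odd-regular graphs. First I would recall that, by Lemma~\ref{lem:deg_nnbc}, any graph admitting a \qnbc has at least one vertex of odd degree. If in addition $G$ is regular, then every vertex of $G$ has the same degree as that vertex, hence odd degree; so $G$ is in fact odd-regular and \emph{all} of its vertices have odd degree. Consequently the quantity $k$ appearing in the theorem — the number of odd-degree vertices of $G$ — equals $|V(G)|$. (This also explains the parenthetical ``(odd)'' in the statement: regularity together with the existence of an odd-degree vertex forces odd-regularity, so the hypothesis is automatically satisfied.)

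With $k = |V(G)|$ in hand, I would simply substitute into the two congruences of the theorem. A positive \qnbc gives $2|E(G)| \equiv -k \equiv -|V(G)| \pmod{4}$, which is part (1); a negative \qnbc gives $2|E(G)| \equiv k \equiv |V(G)| \pmod{4}$, which is part (2). Optionally, using the handshake identity $2|E(G)| = d\,|V(G)|$ for a $d$-regular graph with $d$ odd, one can rephrase the conclusion as $(d+1)|V(G)| \equiv 0 \pmod{4}$ in the positive case and $(d-1)|V(G)|\equiv 0 \pmod 4$ in the negative case, but this restatement is cosmetic and not needed for the proof.

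I do not expect any genuine obstacle here: the only point requiring care is making the reduction $k = |V(G)|$ explicit, i.e., invoking Lemma~\ref{lem:deg_nnbc} to conclude that a regular \qnbc graph must be odd-regular rather than even-regular. Everything else is a direct substitution into an already-proved statement. As a sanity check one can test small cases — for instance a $3$-regular graph such as $K_4$ or the triangular prism — where the congruences are readily verified by hand.
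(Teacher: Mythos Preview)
Your proposal is correct and matches the paper's intended argument: the corollary is stated without proof immediately after the theorem, and the only step is the substitution $k=|V(G)|$ for an odd-regular graph, exactly as you describe. Your additional remark using Lemma~\ref{lem:deg_nnbc} to justify why a regular \qnbcd graph must be odd-regular is a welcome clarification that the paper leaves implicit.
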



\subsection{Some quasi neighborhood  Balanced Colored Graphs}
\begin{theorem}
    The complete graph $K_n$ is negative \qnbcd if and only if $n$ is even.
\end{theorem}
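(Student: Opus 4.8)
The plan is to prove the two implications separately; the converse falls out immediately from \cref{lem:deg_nnbc}, while the forward direction needs only one explicit coloring.

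\textbf{Necessity.} Suppose $K_n$ is negative \qnbcd. Then in particular it is \qnbcl, so by \cref{lem:deg_nnbc} it has a vertex of odd degree. But every vertex of $K_n$ has degree $n-1$, so $n-1$ is odd and hence $n$ is even. (Equivalently, contrapositively: if $n$ is odd then $K_n$ is $(n-1)$-regular with $n-1$ even, so it has no odd-degree vertex and cannot admit a \qnbc at all, let alone a negative one.)

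\textbf{Sufficiency.} Assume $n = 2m$ with $m \ge 1$, and color $m$ vertices red and the remaining $m$ vertices blue. Since $K_n$ is complete, a red vertex is adjacent to all $m-1$ other red vertices and to all $m$ blue vertices, so it has exactly one more blue neighbor than red neighbor; symmetrically, a blue vertex has $m$ red neighbors and $m-1$ blue neighbors, i.e.\ exactly one more red than blue. Thus at every vertex the red- and blue-neighbor counts differ by exactly one, so this is a \qnbc (and the vertex attaining difference one certainly exists). Moreover each vertex has odd degree $n-1$, and its single extra neighbor is of the \emph{opposite} color to the vertex itself, which is precisely the negative variant; hence $K_n$ is negative \qnbcd.

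I expect no genuine obstacle here. The only points needing a little care are (i) checking that the balanced coloring is \emph{negative} and not merely a \qnbc, which amounts to verifying the color of the extra neighbor at a red vertex and at a blue vertex, and (ii) observing that, because all degrees in $K_n$ equal $n-1$, \cref{lem:deg_nnbc} directly pins down the parity of $n$ in the converse.
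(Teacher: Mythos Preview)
Your proof is correct and follows essentially the same approach as the paper: the balanced bipartition into $n/2$ red and $n/2$ blue vertices for even $n$, together with the appeal to \cref{lem:deg_nnbc} for odd $n$. Your version is in fact slightly more careful in explicitly verifying that the surplus neighbor is of the opposite color (i.e., that the coloring is genuinely \emph{negative}), which the paper leaves implicit.
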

\begin{proof}
When $n$ is even, color exactly $\frac{n}{2}$ vertices of $K_n$ red and the remaining $\frac{n}{2}$ vertices blue. Then every vertex has $\frac{n}{2}-1$ neighbors of its own color and $\frac{n}{2}$ neighbors of the opposite color, so the difference between the two counts is exactly $1$. Hence this coloring is a negative quasi neighborhood  balanced coloring of $K_n$.

For $n$ odd, every vertex of $K_n$ has even degree. By Lemma~\ref{lem:deg_nnbc}, no such graph admits a quasi neighborhood  balanced coloring. Therefore $K_n$ is not a \qnbcd graph when $n$ is odd.
\end{proof}
Let $1\leq d\leq \frac{n-1}{2}$. The generalized Petersen graph $GP(n,d)$ consists of a cycle $C_n$ with consecutive vertices $v_0,v_1,\dots,v_{n-1}$ and additional vertices $u_0,u_1,\dots,u_{n-1}$ with an edge between $v_i$ and $u_i$ for $0\leq i \leq n-1$, and edges between $u_i$ and $u_{i+d}$ for $0\leq i \leq n-1$, with subscript arithmetic modulo $n$. 
\begin{theorem}
    The graph $GP(n,d)$ admits positive \qnbc for all positive integers $n$ and $d$ with $1\leq d \leq\frac{n-1}{2}$.
\end{theorem}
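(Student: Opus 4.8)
The plan is to use the obvious bipartition of $V(GP(n,d))$ into the ``outer'' rim vertices $v_0,\dots,v_{n-1}$ and the ``inner'' hub vertices $u_0,\dots,u_{n-1}$: I will colour every $v_i$ red and every $u_i$ blue, and argue that this single colouring is already a positive \qnbc\ for every admissible pair $(n,d)$. Before checking it, I would record the elementary structural facts about $GP(n,d)$: it is $3$-regular, with $v_i$ adjacent to $v_{i-1},v_{i+1},u_i$ and $u_i$ adjacent to $u_{i-d},u_{i+d},v_i$ (subscripts mod $n$). The hypothesis $1\le d\le\frac{n-1}{2}$ gives $2\le 2d\le n-1$, so $2d\not\equiv 0\pmod n$; this is exactly what is needed to guarantee $u_{i-d}\neq u_{i+d}$, that the inner edges carry no multiplicities, and that $GP(n,d)$ is a genuine simple cubic graph (it also forces $n\ge 3$, so the rim is a genuine cycle). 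Since every vertex then has the odd degree $3$, the \qnbc\ inequality at a vertex degenerates to ``exactly two neighbours of its own colour and exactly one of the other'' (the alternative, a $3$--$0$ split, violates it), and consequently in a cubic graph \emph{every} \qnbc\ has each vertex at imbalance exactly one, so the ``at least one vertex of imbalance one'' clause in the definition is automatic.

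With these observations the verification is immediate. For a rim vertex $v_i$ (red): its rim neighbours $v_{i-1},v_{i+1}$ are red and its spoke neighbour $u_i$ is blue, so $v_i$ has two red and one blue neighbour. For a hub vertex $u_i$ (blue): its inner neighbours $u_{i-d},u_{i+d}$ are blue and its spoke neighbour $v_i$ is red, so $u_i$ has two blue and one red neighbour. Hence every vertex has exactly two neighbours of its own colour, the neighbourhood imbalance is $1$ everywhere, and the surplus neighbour always carries the vertex's own colour; in the notation introduced in Section~\ref{sec:results} this says $V(R_2)=V(B_2)=\varnothing$, i.e.\ the colouring is a \emph{positive} \qnbc. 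The argument is uniform in $n$ and $d$ over exactly the claimed range.

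I do not expect a substantive obstacle: the only points requiring attention are the routine simplicity/cubicity bookkeeping for $GP(n,d)$ (the divisibility condition and the small values of $n$) and a consistency check that ``surplus neighbour of the vertex's own colour'' is indeed the convention the paper calls \emph{positive} rather than \emph{negative}. If one prefers a more conceptual phrasing, the same fact reads: the set of spokes is a perfect matching $M$ of $GP(n,d)$ such that $GP(n,d)-M$ is a disjoint union of cycles (the rim $C_n$ together with the $\gcd(n,d)$ inner orbit-cycles), each of which can be given a single colour making every edge of $M$ bichromatic; but the direct check above is the shortest route.
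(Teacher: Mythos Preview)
Your proposal is correct and follows essentially the same approach as the paper: colour every rim vertex $v_i$ red and every hub vertex $u_i$ blue, then observe that each vertex has two neighbours of its own colour and one of the other, giving a positive \qnbc. Your write-up is more careful about the simplicity and cubicity bookkeeping (the condition $2d\not\equiv 0\pmod n$ and the automatic satisfaction of the ``at least one imbalance'' clause), but the underlying construction and verification are identical to the paper's.
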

\begin{proof}
Color the vertices $v_0, v_1, \dots, v_{n-1}$ red and the vertices $u_0, u_1, \dots, u_{n-1}$ blue.  
Under this coloring, each vertex $v_i$ ($0 \le i \le n-1$), colored red, has two red neighbors and one blue neighbor, while each vertex $u_i$ ($0 \le i \le n-1$), colored blue, has two blue neighbors and one red neighbor.  
Thus, every vertex has more neighbors of its own color, and hence the coloring is a positive \qnbc.
\end{proof}
\begin{figure}
    \centering
    \begin{tikzpicture}[scale=2.5,
    vertex/.style={circle, draw, inner sep=2.5 pt}
]

\node[vertex,fill=red] (v1) at (90:1)   {};
\node[vertex,fill=red] (v2) at (162:1)  {};
\node[vertex,fill=red] (v3) at (234:1)  {};
\node[vertex,fill=red] (v4) at (306:1)  {};
\node[vertex,fill=red] (v5) at (18:1)   {};

\node[vertex, fill=blue] (u1) at (90:0.45)   {};
\node[vertex, fill=blue] (u2) at (162:0.45)  {};
\node[vertex, fill=blue] (u3) at (234:0.45)  {};
\node[vertex, fill=blue] (u4) at (306:0.45)  {};
\node[vertex, fill=blue] (u5) at (18:0.45)   {};

\draw (v1)--(v2);
\draw (v2)--(v3);
\draw (v3)--(v4);
\draw (v4)--(v5);
\draw (v5)--(v1);

\draw (u1)--(u3);
\draw (u3)--(u5);
\draw (u5)--(u2);
\draw (u2)--(u4);
\draw (u4)--(u1);

\draw (v1)--(u1);
\draw (v2)--(u2);
\draw (v3)--(u3);
\draw (v4)--(u4);
\draw (v5)--(u5);

\end{tikzpicture}
    \caption{Positive \qnbc of $GP(5,1)$.}
    \label{fig:placeholder}
\end{figure}
\begin{theorem}
    The complete bipartite graph $K_{m,n}$ admits an uniform \qnbc if and only if at least one of $m$ or $n$ is odd.
\end{theorem}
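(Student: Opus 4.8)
The plan is to split the claim into its two implications: the ``only if'' direction is essentially immediate from Lemma~\ref{lem:deg_nnbc}, while the ``if'' direction calls for an explicit construction.

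First I would treat the contrapositive of the forward direction. Suppose $m$ and $n$ are both even, and write the two sides of $K_{m,n}$ as $A$ with $|A|=m$ and $B$ with $|B|=n$. Then every vertex of $A$ has degree $n$ and every vertex of $B$ has degree $m$, so $K_{m,n}$ has no vertex of odd degree, and by Lemma~\ref{lem:deg_nnbc} it admits no \qnbc at all, in particular no uniform one. Hence a uniform \qnbc forces at least one of $m,n$ to be odd. It is worth noting explicitly that this half uses nothing about uniformity: it is purely the parity obstruction.

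For the reverse direction, assume by the symmetry between $A$ and $B$ that $n$ is odd. I would use the ``as balanced as possible'' coloring: color $\lceil m/2\rceil$ vertices of $A$ red and the remaining $\lfloor m/2\rfloor$ blue, and $\lceil n/2\rceil$ vertices of $B$ red and the remaining $\lfloor n/2\rfloor$ blue. The key point to exploit is that the neighborhood of any $v\in A$ is exactly $B$, so $v$ has precisely $\lceil n/2\rceil$ red and $\lfloor n/2\rfloor$ blue neighbors, and symmetrically each vertex of $B$ has $\lceil m/2\rceil$ red and $\lfloor m/2\rfloor$ blue neighbors. Therefore at every vertex the red and blue neighbor counts differ by at most one, and there are never fewer red than blue; since $n$ is odd, each vertex of $A$ has odd degree and sees exactly one more red neighbor than blue, so the difference is exactly one for those vertices. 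This is a \qnbc, and since every odd-degree vertex has its tie-breaking neighbor colored red, it is a uniform \qnbc. (If $m$ is also odd, every vertex is odd-degree and all extra neighbors are still red; if $m$ is even, the vertices of $B$ are perfectly balanced and impose no constraint.)

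I do not expect a genuine obstacle in this proof. The one conceptual observation that makes everything work is that in $K_{m,n}$ the neighborhood imbalance at a vertex depends only on which side the vertex lies on (and on the count of red vertices on the opposite side), which is exactly why the constructed coloring is automatically uniform rather than merely quasi-balanced; the only remaining bookkeeping is the boundary cases $m=1$ or $n=1$, and the same $\lceil \cdot/2\rceil$ recipe handles them without change.
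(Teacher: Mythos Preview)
Your proof is correct and follows essentially the same approach as the paper: the necessity direction via Lemma~\ref{lem:deg_nnbc}, and sufficiency via the ``as balanced as possible'' coloring of each side. The only cosmetic difference is that you make red the majority color while the paper makes it blue, and your use of $\lceil\cdot\rceil/\lfloor\cdot\rfloor$ actually handles the even-side case more cleanly than the paper's phrasing.
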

\begin{proof}
If both $m$ and $n$ are even, then every vertex of $K_{m,n}$ has even degree. Hence, by Lemma~\ref{lem:deg_nnbc}, the graph $K_{m,n}$ cannot admit a quasi neighborhood  balanced coloring.

Conversely, suppose that at least one of $m$ or $n$ is odd. Assume first that $m$ is odd. In the partite set of size $m$, color $\tfrac{m-1}{2}$ vertices red and $\tfrac{m+1}{2}$ vertices blue. Similarly, if $n$ is odd, color $\tfrac{n-1}{2}$ vertices red and $\tfrac{n+1}{2}$ vertices blue in the partite set of size $n$. 

Under this coloring, every vertex of odd degree has exactly one more blue neighbor than red neighbors, while every vertex of even degree has the same number of neighbors of both colors. Therefore $K_{m,n}$ is a uniform \qnbcd graph whenever at least one of $m$ or $n$ is odd.
\end{proof}
\begin{corollary}
    The star $K_{1,n}$ admits uniform \qnbc for all $n$.
\end{corollary}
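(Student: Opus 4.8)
The plan is to observe that the star $K_{1,n}$ is nothing but the complete bipartite graph $K_{m,n}$ with $m=1$, and since $1$ is odd, the hypothesis ``at least one of $m$ or $n$ is odd'' of the preceding theorem holds for every value of $n$. Thus the corollary is an immediate specialization of that theorem. However, the construction in the theorem splits the size-$m$ part into $\tfrac{m-1}{2}$ red and $\tfrac{m+1}{2}$ blue vertices, so the case $m=1$ has an empty ``red half''; for that reason I would prefer to simply record the explicit coloring rather than merely invoke the theorem.

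Concretely, let $c$ denote the center of $K_{1,n}$ and let $\ell_1,\dots,\ell_n$ denote the leaves. I would color $c$ blue, color $\lceil n/2\rceil$ of the leaves blue, and color the remaining $\lfloor n/2\rfloor$ leaves red. The verification is then a one-line check: each leaf $\ell_i$ has exactly one neighbor, namely $c$, which is blue, so it has one blue neighbor and no red neighbor; and $c$ has $\lfloor n/2\rfloor$ red neighbors and $\lceil n/2\rceil$ blue neighbors, so the two counts differ by $0$ when $n$ is even and by $1$ (with the surplus neighbor blue) when $n$ is odd. Hence every odd-degree vertex has exactly one more blue neighbor than red neighbor, which is precisely the defining property of a uniform \qnbc, and the presence of the leaves guarantees that the ``difference exactly one'' clause of the definition is met. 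By Lemma~\ref{lem:deg_nnbc} this also accounts for why $K_{1,n}$ qualifies at all: every leaf is an odd-degree vertex.

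There is essentially no obstacle here; the only point requiring a moment's care is confirming that the coloring is genuinely \emph{uniform} (all surplus neighbors of the same fixed color) rather than merely positive or negative — which it is, since every odd-degree vertex, whether it is a leaf or the center in the odd-$n$ case, has its extra neighbor colored blue.
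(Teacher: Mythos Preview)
Your proposal is correct and matches the paper's intent: the corollary is stated without proof immediately after the $K_{m,n}$ theorem, so the intended argument is precisely your first observation that $K_{1,n}=K_{m,n}$ with $m=1$ odd. The explicit coloring you spell out is exactly the specialization of the theorem's construction to $m=1$, so your added verification is faithful to (and slightly more explicit than) the paper's approach.
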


\begin{theorem}
The path $P_n$ admits \qnbc for all $n$. In addition, the path $P_n$ admits uniform \qnbc if and only if $n\not\equiv1\pmod{4}$.    
\end{theorem}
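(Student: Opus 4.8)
The plan is to classify \emph{all} \qnbcs of $P_n$ and then read off both assertions. Write $V(P_n) = \{v_1,\dots,v_n\}$ in path order; the only vertices of odd degree are the endpoints $v_1, v_n$ (degree $1$), while every internal vertex $v_i$ has degree $2$ with neighbours $v_{i-1}, v_{i+1}$. For such a vertex the \qnbc\ inequality forces $c(v_{i-1}) \neq c(v_{i+1})$ — the only alternative, both neighbours equal, produces imbalance $2$. An endpoint, having a single neighbour, automatically has imbalance exactly $1$, so for $n \ge 2$ a $2$-colouring $c$ of $P_n$ is a \qnbc\ \emph{if and only if} $c(v_j) \neq c(v_{j+2})$ for all $1 \le j \le n-2$, the ``at least one vertex of imbalance $1$'' clause being free (witnessed by $v_1$). (For $n=1$ there is no odd-degree vertex, so by Lemma~\ref{lem:deg_nnbc} the claim is to be read for $n\ge 2$; I assume this from here on.)

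The condition $c(v_j)\neq c(v_{j+2})$ says the odd-indexed vertices alternate in colour and, independently, the even-indexed vertices alternate in colour. Hence $c$ is completely determined by the ordered pair $(c(v_1), c(v_2))$, giving exactly four colourings, each of the form $c_t(v_i) = P(i+t)$ with $t\in\{0,1,2,3\}$, where $P$ is the period-$4$ pattern $RRBB\,RRBB\cdots$ (formally $P(i)=R$ iff $i \bmod 4 \in \{0,1\}$), which satisfies $P(i+2) = \overline{P(i)}$. The first assertion of the theorem is then immediate: any one of the $c_t$, e.g.\ the restriction of $RRBB\cdots$ to $n$ vertices, is a \qnbc.

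For the ``uniform'' statement, note that the two odd-degree vertices $v_1$ and $v_n$ have their unique (hence majority) neighbour coloured $c(v_2)$ and $c(v_{n-1})$, respectively; so a \qnbc\ of $P_n$ is uniform precisely when $c(v_2) = c(v_{n-1})$. Therefore $P_n$ admits a uniform \qnbc\ iff some $c_t$ satisfies $P(t+2) = P(t + n - 1)$, equivalently (using $P(t+2) = \overline{P(t)}$) iff some $t$ satisfies $P(t) \neq P(t+n-1)$. Since $P$ has period $4$, the condition $P(t)\neq P(t+k)$ is unsatisfiable when $4\mid k$ and is satisfiable for some $t$ when $4\nmid k$ — a one-line check over $k \bmod 4 \in \{1,2,3\}$. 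Taking $k = n-1$ yields exactly the stated criterion $n\not\equiv 1\pmod 4$; the ``only if'' direction comes for free, since \emph{every} \qnbc\ of $P_n$ is one of the four $c_t$.

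The routine points to keep honest are the small cases $n=2,3$ (where some of the $c_t$ coincide, but the same analysis still applies) and the closing modular bookkeeping, making sure the forbidden residue is $1$ and not $0$ or $3$. The one genuinely load-bearing step is the reduction ``the internal vertices of $P_n$ force $c(v_j)\neq c(v_{j+2})$, and impose nothing else,'' which is what collapses the space of admissible colourings to exactly four and makes everything else bookkeeping.
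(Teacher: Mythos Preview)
Your proof is correct, and your caveat about $n=1$ is a genuine correction to the statement as printed (a single vertex has no odd-degree vertex, so $P_1$ admits no \qnbc).

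Your approach differs in organisation from the paper's, though the key observation is the same. The paper proceeds by cases on $n \bmod 4$: for $n\equiv 0,2,3$ it writes down an explicit period-$4$ colouring and checks that the two pendant neighbours $v_2,v_{n-1}$ receive the same colour; for $n\equiv 1$ it fixes $c(v_1)$ and branches on $c(v_2)$, observing in each branch that the colouring is forced into an $RRBB$-type pattern and that $v_2,v_{n-1}$ end up differently coloured. You instead front-load the structural step --- internal vertices force $c(v_j)\neq c(v_{j+2})$, hence every \qnbc\ is one of exactly four shifts of the $RRBB$ pattern --- and then read off both existence and the uniform criterion from the single equation $P(t+2)=P(t+n-1)$. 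This buys you a uniform treatment of all residues and an automatic ``only if'' direction (since you have classified \emph{all} \qnbcs, not just exhibited one per case); the paper's version is more hands-on but requires separate constructions and a separate impossibility argument.
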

 \begin{proof}
Let $V(P_n)=\{v_1,v_2,\dots,v_n\}$ and $E(P_n)=\{v_iv_{i+1};\;1\le i\le n-2\}$ denote the vertex set and edge set of $P_n$, respectively.

\begin{description}

\item[Case 1:] $n\equiv 2,3 \pmod{4}$.\\
Define a coloring $c\colon V(P_n)\to\{\text{red},\text{blue}\}$ by
\[
c(v_i)=
\begin{cases}
\text{red}, & \text{if}\ i\equiv 1,2 \pmod{4},\\[2pt]
\text{blue}, & \text{if}\ i\equiv 3,0 \pmod{4}.
\end{cases}
\]
By construction, vertices at distance two receive different colors.  
Each non-pendant vertex $v_i$ ($2\le i\le n-1$) has neighbors $v_{i-1}$ and $v_{i+1}$, which under this coloring always receive opposite colors; hence every such vertex has one red neighbor and one blue neighbor.  
The pendant vertices $v_1$ and $v_n$ have neighbors $v_2$ and $v_{n-1}$, respectively. Since $n\equiv 2,3\pmod{4}$, both $v_2$ and $v_{n-1}$ receive the same color (red).  
Thus, the coloring $c$ is a uniform \qnbc{} of $P_n$.

\item[Case 2:] $n\equiv 0 \pmod{4}$.\\  
Define a coloring $c\colon V(P_n)\to\{\text{red},\text{blue}\}$ by
\[
c(v_i)=
\begin{cases}
\text{red}, & \text{if}\ i\equiv 0,1 \pmod{4},\\[2pt]
\text{blue}, & \text{if}\ i\equiv 2,3 \pmod{4}.
\end{cases}
\]
As in Case~1, vertices at distance two receive different colors, and every non-pendant vertex $v_i$ has neighbors $v_{i-1}$ and $v_{i+1}$ of opposite colors.  
For the pendant vertices $v_1$ and $v_n$, their neighbors $v_2$ and $v_{n-1}$ respectively, both receive blue when $n\equiv 0\pmod{4}$.  
Hence, the coloring $c$ is a uniform \qnbc{} of $P_n$.

\item[Case 3:] $n\equiv 1 \pmod{4}$.\\  
Without loss of generality, color $v_1$ red. Two subcases arise depending on the color of $v_2$.

\smallskip
\emph{Subcase A: $c(v_2)=\text{red}$.} \\ 
Then $v_3$ and $v_4$ must be blue, and the repeating pattern \(\text{red},\text{red},\text{blue},\text{blue}\) continues along the path.  
In this pattern, $v_{n-1}$ receives blue while $v_2$ receives red, implying that the two pendant vertices have neighbors of different colors.

\smallskip
\emph{Subcase B: $c(v_2)=\text{blue}$.} \\ 
Then $v_3$ must be blue and $v_4$ red, giving the repeating pattern \(\text{red},\text{blue},\text{blue},\text{red}\).  
In this pattern, $v_{n-1}$ receives red while $v_2$ receives blue, again resulting in pendant neighbors that are differently colored.

\smallskip
In both subcases, the pendant vertices have neighbors of different colors, and hence $P_n$ does not admit a uniform \qnbc when $n\equiv 1\pmod{4}$.  
However, each of the colorings constructed above are still \qnbc, so $P_n$ does admit a \qnbc in this case too.

\end{description}
\end{proof}
\begin{figure}
    \centering
    \begin{tikzpicture}[
    scale=1,
    vertex/.style={circle, draw, inner sep=2pt}
]

\node[vertex, fill=red] (p51) at (0,0) {};
\node[vertex, fill=red] (p52) at (1,0) {};
\node[vertex, fill=blue] (p53) at (2,0) {};
\node[vertex, fill=blue] (p54) at (3,0) {};
\node[vertex, fill=red] (p55) at (4,0) {};

\draw (p51)--(p52)--(p53)--(p54)--(p55);

\node at (2,-0.7) {$P_5$};

\node[vertex, fill=red] (p61) at (0,-2) {};
\node[vertex, fill=red] (p62) at (1,-2) {};
\node[vertex, fill=blue] (p63) at (2,-2) {};
\node[vertex, fill=blue] (p64) at (3,-2) {};
\node[vertex, fill=red] (p65) at (4,-2) {};
\node[vertex, fill=red] (p66) at (5,-2) {};

\draw (p61)--(p62)--(p63)--(p64)--(p65)--(p66);

\node at (2.5,-2.7) {$P_6$};

\node[vertex, fill=red] (p71) at (0,-4) {};
\node[vertex, fill=red] (p72) at (1,-4) {};
\node[vertex, fill=blue] (p73) at (2,-4) {};
\node[vertex, fill=blue] (p74) at (3,-4) {};
\node[vertex, fill=red] (p75) at (4,-4) {};
\node[vertex, fill=red] (p76) at (5,-4) {};
\node[vertex, fill=blue] (p77) at (6,-4) {};

\draw (p71)--(p72)--(p73)--(p74)--(p75)--(p76)--(p77);

\node at (3,-4.7) {$P_7$};

\node[vertex, fill=red] (p81) at (0,-6) {};
\node[vertex, fill=blue] (p82) at (1,-6) {};
\node[vertex, fill=blue] (p83) at (2,-6) {};
\node[vertex, fill=red] (p84) at (3,-6) {};
\node[vertex, fill=red] (p85) at (4,-6) {};
\node[vertex, fill=blue] (p86) at (5,-6) {};
\node[vertex, fill=blue] (p87) at (6,-6) {};
\node[vertex, fill=red] (p88) at (7,-6) {};

\draw (p81)--(p82)--(p83)--(p84)--(p85)--(p86)--(p87)--(p88);

\node at (3.5,-6.7) {$P_8$};

\end{tikzpicture}
    \caption{Quasi neighborhood balanced coloring of Paths $P_5,P_6,P_7,$ and $P_8$.}
    \label{fig:placeholder}
\end{figure}
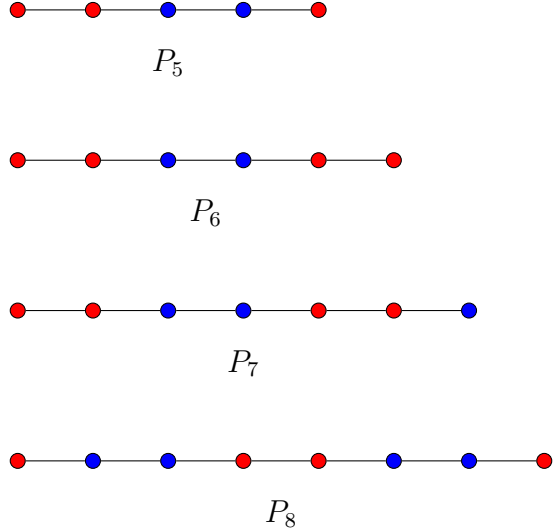

The bistar $B_{m,n}$ is a tree with vertex set $V(B_{m,n})=\{a,b\}\cup \{u_i: 1\leq i \leq m\}\cup\{v_i: 1\leq i \leq n\}$ and edge set $E(B_{m,n})=\{ab\}\cup \{au_i: 1\leq i \leq m\}\cup \{bv_i: 1\leq i \leq n\}$. We now give conditions on $m,n$ for the bistar $B_{m,n}$ to admit a \qnbc.
\begin{theorem}
 The bistar $B_{m,n}$ admits uniform \qnbc for all positive integers $m$ and $n$.   
\end{theorem}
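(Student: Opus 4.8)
The plan is to construct an explicit uniform \qnbc{} of $B_{m,n}$. The crucial observation is that each leaf $u_i$ (and each $v_j$) has degree one, so it has a single neighbor, namely $a$ (respectively $b$); consequently a leaf always has a red/blue neighbor imbalance of exactly one, and the ``extra'' neighbor of a leaf is simply its unique neighbor. Since a \emph{uniform} \qnbc{} requires the extra neighbor of every odd-degree vertex to carry one fixed color, the first step is to color both central vertices $a$ and $b$ with that fixed color, say blue; this automatically makes the extra (unique) neighbor of every leaf blue.

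The second step is to color the leaves so as to balance the two central vertices, and this splits into parity cases. The vertex $a$ has neighbors $b$ (blue) together with $u_1,\dots,u_m$. If $m$ is odd, color $\tfrac{m+1}{2}$ of the $u_i$ red and $\tfrac{m-1}{2}$ blue, so that $a$ has $\tfrac{m+1}{2}$ red and $\tfrac{m+1}{2}$ blue neighbors and is perfectly balanced (its degree $m+1$ is even, so no extra-neighbor condition applies). If $m$ is even, color $\tfrac m2$ of the $u_i$ red and $\tfrac m2$ blue, so that $a$ has $\tfrac m2$ red and $\tfrac m2+1$ blue neighbors: the imbalance is exactly one toward blue, which is consistent with uniformity since $\deg(a)=m+1$ is odd and its extra neighbor is blue. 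Carry out the analogous coloring of $v_1,\dots,v_n$ according to the parity of $n$ to handle the vertex $b$.

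The final step is to check the three defining conditions of a uniform \qnbc{}. Every leaf has its single neighbor colored blue, so its red and blue neighbor counts differ by exactly one and the extra neighbor is blue; the vertices $a$ and $b$ were arranged above to have imbalance $0$ (when $m$, resp.\ $n$, is odd) or imbalance $1$ with a blue extra neighbor (when $m$, resp.\ $n$, is even); there are no other vertices. Hence the imbalance is at most one at every vertex, it equals one at $u_1$ (so the ``exactly one somewhere'' requirement holds), and at every odd-degree vertex the surplus neighbor is blue, giving a uniform \qnbc. I do not expect a genuine obstacle here; the only point that needs care is the parity bookkeeping for $a$ and $b$ — verifying that the coloring makes their surplus neighbor blue in precisely the cases ($m$ even, resp.\ $n$ even) where they have odd degree, so that uniformity is not violated.
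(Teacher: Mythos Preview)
Your construction is correct and is essentially the same as the paper's: both central vertices receive one fixed color, and the leaves on each side are split as evenly as possible so that each center is either balanced (when it has even degree) or has surplus one in the fixed color (when it has odd degree). The only difference is cosmetic---you choose blue as the fixed color while the paper chooses red---so your argument is simply the complementary coloring of the paper's.
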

\begin{proof}
For $n$ even, color ${n}/{2}$ pendant vertices red and the remaining ${n}/{2}$ blue.  
For $n$ odd, color ${(n-1)}/{2}$ pendant vertices red and the remaining ${(n+1)}/{2}$ blue. Use a similar approach to color the other $m$ pendant vertices depending upon the parity of $m$.  
In addition, color the two non-pendant vertices $a$ and $b$ red.

In all cases, each pendant vertex has its unique neighbor colored red.  
Now consider the non-pendant vertices $a$ and $b$.  

If $n$ (or $m$) is even, then each of $a$ and $b$ has exactly one more red neighbor than blue neighbors, since the pendant vertices contribute an equal number of red and blue neighbors.  

If $n$ (or $m$) is odd, then each of $a$ and $b$ has an equal number of red and blue pendant neighbors.  

Thus, in every case, the difference between the number of red and blue neighbors of every vertex is at most one, with any excess always in favor of red. Hence, the resulting colouring is a uniform \qnbc.
\end{proof}
\begin{figure}
    \centering
    \begin{tikzpicture}[
    scale=2,
    vertex/.style={circle, draw, inner sep=2pt}
]

\node[vertex, fill=red] (c1) at (0,0) {};
\node[vertex, fill=red] (c2) at (1,0) {};

\draw (c1)--(c2);

\node[vertex, fill=red] (u1) at (-1,1) {};
\node[vertex, fill=red] (u2) at (-1,0.5) {};
\node[vertex, fill=blue] (u3) at (-1,0) {};
\node[vertex, fill=blue] (u4) at (-1,-0.5) {};
\node[vertex, fill=blue] (u5) at (-1,-1) {};

\draw (c1)--(u1);
\draw (c1)--(u2);
\draw (c1)--(u3);
\draw (c1)--(u4);
\draw (c1)--(u5);

\node[vertex, fill=red] (v1) at (2,1.25) {};
\node[vertex, fill=red] (v2) at (2,0.75) {};
\node[vertex, fill=red] (v3) at (2,0.25) {};
\node[vertex, fill=blue] (v4) at (2,-0.25) {};
\node[vertex, fill=blue] (v5) at (2,-0.75) {};
\node[vertex, fill=blue] (v6) at (2,-1.25) {};

\draw (c2)--(v1);
\draw (c2)--(v2);
\draw (c2)--(v3);
\draw (c2)--(v4);
\draw (c2)--(v5);
\draw (c2)--(v6);


\end{tikzpicture}
    \caption{Uniform \qnbc of Bistar $B_{5,6}$.}
    \label{fig:placeholder}
\end{figure}


\subsection{\small Operations with quasi neighborhood  Balanced Colored Graphs}
For any of the graph products and a fixed vertex $u$ of $G$, the set of vertices $\{(u,v):\ v\in V(H)\}$ is called an $H$-layer, and we denote it by $H_u$. Similarly, if $v\in V(H)$ is fixed, then the set of vertices $\{(u,v):\ u\in V(G)\}$ is called a $G$-layer and we denote it by $G_v$.
If one constructs $V(G)\times V(H)$ in a natural way, the $H$-layers are represented by rows and the $G$-layers are represented by columns.\par \bigskip
The {\it lexicographic product} of $G$ and $H$, denoted by $G[H]$, is a graph with vertex set $V(G)\times V(H)$ and two vertices $(u,v)$ and $(u',v')$ are adjacent if and only if either $uu'\in E(G)$ or $u=u'$ and $vv'\in E(H)$.
It may be instructive to instead construct $G[H]$ by replacing every vertex of $G$ with a copy of $H$ and then replacing each edge of $G$ with a complete bipartite graph between the corresponding copies of $H$.
\begin{theorem}
    Let $G$ and $H$ be two graphs. If $H$ admits \qnbc $c$ with $|V^c_H(R)|=|V^c_H(B)|$, then $G[H]$ admits \qnbc. 
\end{theorem}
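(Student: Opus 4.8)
The plan is to lift the colouring $c$ of $H$ to the whole product. Define $c'\colon V(G[H])\to\{\text{red},\text{blue}\}$ by $c'(u,v)=c(v)$; in other words, paint every $H$-layer $H_u$ with an identical copy of $c$. I would then compute the red/blue neighbourhood imbalance of an arbitrary vertex $(u,v)$ under $c'$ and show it never exceeds one, while exhibiting a vertex where it equals one.

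By the definition of the lexicographic product, the neighbours of $(u,v)$ are exactly the vertices $(u,v')$ with $v'\in N_H(v)$ (inside its own layer) together with the whole of each layer $H_{u'}$ for $u'\in N_G(u)$. Writing $r_H(v)$ and $b_H(v)$ for the number of red and blue $H$-neighbours of $v$ under $c$, and using $c'(u,v')=c(v')$ and $c'(u',v'')=c(v'')$, the number of red neighbours of $(u,v)$ equals $r_H(v)+\deg_G(u)\cdot|V^c_H(R)|$ and the number of blue neighbours equals $b_H(v)+\deg_G(u)\cdot|V^c_H(B)|$. Now I invoke the hypothesis $|V^c_H(R)|=|V^c_H(B)|$: the contributions of the adjacent layers cancel, so the imbalance at $(u,v)$ equals $r_H(v)-b_H(v)$, which has absolute value at most $1$ since $c$ is a \qnbc{} of $H$.

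It remains to produce a vertex of $G[H]$ with imbalance exactly one. Because $c$ is a genuine \qnbc{} of $H$ (not merely neighborhood balanced), there is some $v_0\in V(H)$ with $|r_H(v_0)-b_H(v_0)|=1$; fixing any $u_0\in V(G)$, the computation above shows $(u_0,v_0)$ has imbalance exactly $1$. Hence $c'$ is a \qnbc{} of $G[H]$.

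I do not anticipate a serious obstacle here: the only points that need care are the bookkeeping of adjacency in the lexicographic product and a brief comment on degenerate cases (if $G$ has no edges, $G[H]$ is a disjoint union of copies of $H$, each already carrying the \qnbc{} $c$, so the conclusion is immediate). It is also worth remarking in passing that the same computation shows the sign of the imbalance at $(u,v)$ equals that at $v$, so $c'$ in fact inherits whichever of the uniform, positive, or negative variants $c$ possesses.
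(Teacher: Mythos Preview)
Your proposal is correct and follows essentially the same approach as the paper: copy the \qnbc $c$ onto every $H$-layer, observe that the cross-layer contributions cancel because $|V^c_H(R)|=|V^c_H(B)|$, and conclude that the imbalance at $(u,v)$ equals the imbalance at $v$ in $H$. Your write-up is in fact slightly more complete than the paper's, since you explicitly exhibit a vertex of imbalance exactly one and note the inheritance of the uniform/positive/negative variants, which the paper only mentions as a remark after the proof.
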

   \begin{proof} Color vertices of $G[H]$ as follows.
Apply the quasi neighborhood  balanced coloring $c$ of $H$ to every $H$-layer in the lexicographic product $G[H]$. We claim that this induces a quasi neighborhood  balanced coloring of $G[H]$.

Consider any vertex $(u,v) \in G[H]$. Since $c$ is a \qnbc of $H$, the difference between the number of red neighbors and blue neighbors of $(u,v)$ within its own layer $H_u$ is at most one.

Now look at the neighbors of $(u,v)$ outside the $H$-layer $H_u$. For each neighbor $u'$ of $u$ in $G$, the vertex $(u,v)$ is adjacent to every vertex in the layer $H_{u'}$. Hence, outside its layer, $(u,v)$ has
$\deg_G(u)\cdot |V_H^c(R)|$ red neighbors and 
$\deg_G(u)\cdot |V_H^c(B)|$ blue neighbors.
Further, as $|V_H^c(R)| = |V_H^c(B)|$, the contributions of red and blue neighbors from all other layers is equal.

Combining these two observations, the total difference between the number of red and blue neighbors of $(u,v)$ in $G[H]$ is at most one. Thus the coloring obtained is a quasi neighborhood  balanced coloring of $G[H]$, as claimed.
\end{proof}
Using the same approach as in the above theorem, we can show that if the graph
\(H\) admits a uniform/positive/negative \qnbc with
\(|V_H^c(R)| = |V_H^c(B)|\), then the lexicographic product \(G[H]\)
also admits a uniform/positive/negative \qnbc.

\begin{theorem}
    If $G$ is \nbcd graph and $H$ is \qnbcd graph, then $G[H]$ is \qnbcd graph.  
\end{theorem}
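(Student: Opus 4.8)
The plan is to build a coloring of $G[H]$ out of a \nbc of $G$ and a \qnbc of $H$, using the coloring of $G$ to cancel the cross-layer imbalance that a single copy of $H$ would otherwise create. Fix a \nbc $c_G$ of $G$ and a \qnbc $c_H$ of $H$; set $p=|V^{c_H}_H(R)|$ and $q=|V^{c_H}_H(B)|$, and let $w\in V(H)$ be a vertex whose numbers of red and blue neighbours under $c_H$ differ by exactly one (such a $w$ exists because $c_H$ is a \qnbc). Let $\overline{c_H}$ denote the coloring of $H$ obtained from $c_H$ by interchanging the two colours; note that $\overline{c_H}$ merely swaps the red- and blue-neighbour counts at each vertex of $H$, so it leaves the absolute value of (red neighbours $-$ blue neighbours) unchanged at every vertex, while exchanging the roles of $p$ and $q$.

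Then I would define a coloring $c$ of $G[H]$ layer by layer: colour the $H$-layer $H_u$ by $c_H$ whenever $c_G(u)$ is red, and by $\overline{c_H}$ whenever $c_G(u)$ is blue. The claim is that $c$ is a \qnbc of $G[H]$, and the verification proceeds by splitting, for an arbitrary vertex $(u,v)$, its neighbours into those in its own layer $H_u$ and those outside. Inside $H_u$, the neighbours of $(u,v)$ are exactly the copies of the $H$-neighbours of $v$, coloured by $c_H$ or by $\overline{c_H}$; in either case the difference between the numbers of red and blue neighbours of $(u,v)$ inside $H_u$ has absolute value at most one, with equality precisely when the corresponding $H$-discrepancy of $v$ equals one. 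Outside $H_u$, the vertex $(u,v)$ is adjacent to all of $H_{u'}$ for each $G$-neighbour $u'$ of $u$; such a layer contributes $p$ red and $q$ blue neighbours if $c_G(u')$ is red, and $q$ red and $p$ blue neighbours if $c_G(u')$ is blue. Since $c_G$ is a \nbc, the vertex $u$ has equally many red and blue neighbours in $G$, so the outside contributions to red and to blue cancel exactly.

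Putting the two parts together, the total discrepancy between red and blue neighbours of $(u,v)$ equals its within-layer discrepancy, which is at most one; this gives the ``$\le 1$'' condition at every vertex. For the ``exactly one at some vertex'' requirement, the vertex $(u,w)$ (for any $u$) has within-layer discrepancy of absolute value one — using that $\overline{c_H}$ preserves absolute discrepancies in layers over blue vertices of $G$ — and hence total discrepancy of absolute value one. Therefore $G[H]$ admits a \qnbc.

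I do not anticipate a real obstacle: the single genuine idea is the ``flip the copies sitting over the blue vertices of $G$'' device, which forces the cross-layer sums to cancel even though $p\neq q$ in general; the previously stated product theorem is exactly the special case $p=q$, where no flipping is needed. The only points worth writing out carefully are that $\overline{c_H}$ keeps every vertex's absolute red-minus-blue neighbour count intact (so the within-layer bound still holds over blue vertices of $G$), and that the degenerate cases — $G$ edgeless, or isolated vertices appearing in $G$ or $H$ — are handled automatically, since then the relevant outside contribution is simply $0$ and the within-layer analysis is unaffected.
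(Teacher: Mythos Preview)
Your proof is correct and follows essentially the same construction as the paper: colour each $H$-layer by $c_H$ or by its complement according to the \nbc of $G$, then observe that the cross-layer contributions cancel because $u$ has equally many red and blue $G$-neighbours while the within-layer discrepancy is at most one. Your write-up is in fact slightly more complete than the paper's, since you explicitly verify the ``exactly one somewhere'' clause of the \qnbc definition by tracking a witness vertex $w$.
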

\begin{proof}
Let $g$ be a neighborhood balanced coloring of $G$ and let $h$ be a quasi neighborhood  balanced coloring of $H$. We define a coloring of the lexicographic product $G[H]$ as follows.

Fix a vertex $u \in V(G)$ and consider any vertex $(u,v) \in V(G[H])$. If $g(u)$ is red, then we color the entire $H$-layer $H_u$ using the coloring $h$. If $g(u)$ is blue, then we color the entire $H$-layer $H_u$ using the complementary coloring $\bar{h}$. In this way, all the vertices of $G[H]$ are colored. We claim that this is a \qnbc of $G[H]$.

Consider an arbitrary vertex $(u,v)\in V(G[H])$. Since $G$ is a \nbc graph, we may assume that $u$ has exactly $k$ red neighbors and $k$ blue neighbors in $G$ for some integer $k$. Because $h$ (and therefore $\bar{h}$) is a \qnbc of $H$, the difference between the number of red and blue neighbors of $(u,v)$ within its own layer $H_u$ is at most one.

It remains to analyze the neighbors of $(u,v)$ lying outside $H_u$. Each red neighbor of $u$ in $G$ contributes a full copy of $H$, colored according to $h$, and each blue neighbor contributes a full copy of $H$, colored according to $\bar{h}$. Hence the number of red neighbors of $(u,v)$ outside $H_u$ is
$k\bigl(|V_H^h(R)| + |V_H^{\bar{h}}(R)|\bigr)$,
and the number of blue neighbors outside $H_u$ is
$k\bigl(|V_H^h(B)| + |V_H^{\bar{h}}(B)|\bigr)$.

Since $h$ and $\bar{h}$ are complementary colorings, we have
\[
|V_H^h(R)| = |V_H^{\bar{h}}(B)| \qquad\text{and}\qquad
|V_H^h(B)| = |V_H^{\bar{h}}(R)|.
\]
Therefore,
\[
|V_H^h(R)| + |V_H^{\bar{h}}(R)|
= |V_H^h(B)| + |V_H^{\bar{h}}(B)|,
\]
and thus the red and blue contributions from all other layers are equal.

Combining this with the fact that the difference within $H_u$ is at most one, it follows that $(u,v)$ satisfies the quasi neighborhood  balanced condition in $G[H]$.

Hence the defined coloring is a \qnbc of $G[H]$, completing the proof.
\end{proof}
Using the same approach as in the above theorem, we can show that if the graph
\(H\) admits a uniform/positive/negative \qnbc and $G$ admits \nbc, then the lexicographic product \(G[H]\)
also admits a uniform/positive/negative \qnbc.

The {\it direct product} of $G$ and $H$, denoted by $G\times H$, is a graph with vertex set $V(G)\times V(H)$ and two vertices $(u,v)$ and $(u',v')$ are adjacent if and only if $uu'\in E(G)$ and $vv'\in E(H)$.
\begin{theorem}\label{th:nbc_direct}
    If $G$ is \qnbcd graph and $H$ is \nbcd graph, then $G\times H$ is \nbcd graph.
\end{theorem}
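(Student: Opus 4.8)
The plan is to colour $G\times H$ by a ``diagonal'' combination of a \qnbc\ of $G$ with a \nbc\ of $H$, in the same spirit as the lexicographic-product arguments above, and then to check that the balance at every vertex is in fact exact. Concretely, fix a \qnbc\ $c$ of $G$ and a \nbc\ $d$ of $H$, and colour the product as follows: for each fixed $u\in V(G)$, colour the $H$-layer $H_u$ by $d$ if $c(u)$ is red and by the complementary colouring $\bar d$ if $c(u)$ is blue; equivalently, $(u,v)$ is red precisely when $c(u)=d(v)$. (Note that only the existence of the 2-colouring $c$ of $V(G)$ is used, so the \qnbc\ hypothesis on $G$ serves merely to supply it.)

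Next I would verify that this colouring is a \nbc\ of $G\times H$. Take an arbitrary vertex $(u,v)$; its neighbours are exactly the pairs $(u',v')$ with $u'\in N_G(u)$ and $v'\in N_H(v)$, and such a neighbour is red iff $c(u')=d(v')$. The key move is to group the neighbours of $(u,v)$ by their first coordinate $u'$. Since $d$ is a \nbc\ of $H$, the set $N_H(v)$ splits into $\tfrac{\deg_H(v)}{2}$ vertices with $d$-value red and $\tfrac{\deg_H(v)}{2}$ with $d$-value blue. Hence, for each fixed $u'\in N_G(u)$: if $c(u')$ is red then $(u',v')$ is red for exactly the $\tfrac{\deg_H(v)}{2}$ choices of $v'$ with $d(v')$ red, while if $c(u')$ is blue then $(u',v')$ is red for exactly the $\tfrac{\deg_H(v)}{2}$ choices with $d(v')$ blue. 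Either way, $u'$ contributes exactly $\tfrac{\deg_H(v)}{2}$ red neighbours and $\tfrac{\deg_H(v)}{2}$ blue ones, so
\[
\bigl|\,N_{G\times H}(u,v)\cap \text{red}\,\bigr|
=\deg_G(u)\cdot\frac{\deg_H(v)}{2}
=\bigl|\,N_{G\times H}(u,v)\cap \text{blue}\,\bigr|.
\]
Since this holds for every vertex $(u,v)$, the colouring is neighbourhood balanced, i.e.\ $G\times H$ is \nbcd.

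Finally I would remark that the statement is non-vacuous: by Lemma~\ref{lem:deg_nnbc} the graph $G$ has a vertex of odd degree and hence an edge, and $H$, carrying a \nbc, also has an edge, so $G\times H$ has an edge and the balanced colouring above is genuinely a colouring of a graph with edges.

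I do not expect a serious obstacle here. The one point to get right is the observation that, unlike in the lexicographic product, both the $G$-layers and the $H$-layers of $G\times H$ are independent sets, so there is no ``within-layer'' contribution to track: every edge of $G\times H$ joins vertices differing in both coordinates, and the \nbc\ of $H$ alone forces the block of neighbours over each fixed $u'$ to split evenly regardless of the value $c(u')$. That evenly-splitting block is the crux of the count; the rest is bookkeeping.
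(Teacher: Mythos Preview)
Your proof is correct, and the colouring you define is in fact the \emph{same} one the paper uses: the paper colours each $G$-layer $G_v$ by $g$ or $\overline g$ according to $h(v)$, which amounts to ``$(u,v)$ is red iff $g(u)=h(v)$'', identical to your rule ``$(u,v)$ is red iff $c(u)=d(v)$'' viewed from the other factor. The difference is in the bookkeeping. The paper groups the neighbours of $(u,v)$ by the second coordinate $v'$: in each $G_{v'}$ the red--blue excess equals the excess of $u$ under $g$ (up to sign), and then the \nbc\ of $H$ makes these signed excesses cancel. You instead group by the first coordinate $u'$ and observe that, because $d$ is a \nbc, each such block is already perfectly split, regardless of $c(u')$. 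Your grouping is cleaner and, as you note, reveals that the \qnbc\ hypothesis on $G$ plays no role whatsoever: any $2$-colouring of $V(G)$, even a constant one, would do. The paper's grouping obscures this, since it explicitly invokes the red--blue imbalance at $u$ before cancelling it.

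One small inaccuracy in your closing remark: a \nbcd\ graph need not have an edge, since an edgeless graph is trivially neighbourhood balanced. This does not affect the proof (an edgeless $G\times H$ is trivially \nbcd), so the non-vacuity comment can simply be dropped.
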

\begin{proof}
Let $G$ admit a quasi neighborhood  balanced coloring $g$, and let $H$ admit a neighborhood balanced coloring $h$. We define a coloring on the direct product $G \times H$ as follows.

Fix a vertex $v \in V(H)$ and consider any vertex $(u,v) \in V(G \times H)$. If $v$ is colored red under $h$, then we color the entire $G$-layer $G_v$ using $g$. If $v$ is colored blue, we color the layer $G_v$ using the complementary coloring $\overline{g}$. In this way, every vertex of $G \times H$ receives a color.

We now show that this coloring is a neighborhood balanced coloring of $G \times H$.

Let $(u,v)$ be any vertex in $G \times H$. Suppose that, under the coloring $g$ of $G$, the vertex $u$ has exactly one more red neighbor than blue. Then in the product $G \times H$, the vertex $(u,v)$ will have one more red neighbor than blue in precisely those layers $G_{v'}$ where $v'$ is a neighbor of $v$ in $H$ that is colored red (under $h$). Likewise, $(u,v)$ will have one more blue neighbor than red in the layers corresponding to neighbors $v'$ of $v$ in $H$ that are colored blue.

Since $H$ is a neighborhood balanced graph, the vertex $v$ has an equal number of red and blue neighbors under the coloring $h$. Consequently, every vertex $(u,v)$ in $G \times H$ has exactly the same number of red and blue neighbors and thus the defined coloring is a neighborhood balanced coloring of $G \times H$, completing the proof.
\end{proof}
\begin{theorem}\label{th:nnbc_direct}
    If $G$ and $H$ are \qnbcd graphs, then so is $G\times H$.
\end{theorem}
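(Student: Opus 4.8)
The plan is to reuse, essentially verbatim, the coloring from the proof of Theorem~\ref{th:nbc_direct}, only now exploiting the extra slack that comes from $H$ being merely \qnbcd rather than \nbcd. It is cleanest to encode the two colors numerically: fix a \qnbc $g$ of $G$ and write $g(u)\in\{+1,-1\}$ (say $+1$ for red, $-1$ for blue), and likewise fix a \qnbc $h$ of $H$ with values $h(v)\in\{+1,-1\}$. Define a coloring $c$ of $G\times H$ by $c(u,v)=g(u)\cdot h(v)$; equivalently, $(u,v)$ is colored red precisely when $g(u)$ and $h(v)$ agree and blue otherwise. This is exactly the rule used in Theorem~\ref{th:nbc_direct} (colour the layer $G_v$ by $g$ if $h(v)$ is red and by $\overline{g}$ if $h(v)$ is blue).

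The key point is that in the direct product the neighborhood of a vertex splits as a product, $N_{G\times H}((u,v))=N_G(u)\times N_H(v)$. For a two-colored graph $F$ write $\Delta_F(w)$ for the sum of the color values ($\pm1$) over $N_F(w)$; this is exactly (number of red neighbors of $w$) minus (number of blue neighbors of $w$), so the \qnbc inequality at $w$ says $\Delta_F(w)\in\{-1,0,1\}$. Then
\[
\Delta_{G\times H}\big((u,v)\big)
=\sum_{\substack{u'\in N_G(u)\\ v'\in N_H(v)}} g(u')\,h(v')
=\Big(\sum_{u'\in N_G(u)} g(u')\Big)\Big(\sum_{v'\in N_H(v)} h(v')\Big)
=\Delta_G(u)\cdot\Delta_H(v).
\]
Since $g$ is a \qnbc of $G$ we have $\Delta_G(u)\in\{-1,0,1\}$ for every $u$, and likewise $\Delta_H(v)\in\{-1,0,1\}$; hence the product lies in $\{-1,0,1\}$, so $c$ satisfies the \qnbc inequality at every vertex of $G\times H$ (including any vertex that happens to become isolated in the product, where the imbalance is $0$).

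It remains to check the "exactly one" clause of the definition. Because $G$ is \qnbcd there is a vertex $u_0$ with $\Delta_G(u_0)=\pm1$, and because $H$ is \qnbcd there is $v_0$ with $\Delta_H(v_0)=\pm1$; then $\Delta_{G\times H}((u_0,v_0))=\Delta_G(u_0)\Delta_H(v_0)=\pm1$, as needed. So $c$ is a \qnbc of $G\times H$. I do not expect a real obstacle here — the argument is a one-line multiplicativity computation — the only thing requiring care is precisely this last step, i.e.\ remembering to exhibit a vertex attaining imbalance $1$ by pairing the distinguished vertices of $G$ and $H$. (If one also wanted the uniform/positive/negative refinements, one would additionally track that the sign of the "extra" neighbor at such a pair $(u_0,v_0)$ is the product of the corresponding signs in $G$ and $H$, and record which combinations of variants of $g$ and $h$ survive.)
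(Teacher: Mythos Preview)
Your proof is correct and uses exactly the same coloring as the paper (color the layer $G_v$ by $g$ or $\overline{g}$ according to $h(v)$); your $\pm1$ encoding and the identity $\Delta_{G\times H}((u,v))=\Delta_G(u)\cdot\Delta_H(v)$ streamline the case analysis and you also explicitly verify the ``at least one vertex with imbalance exactly one'' clause, which the paper's version leaves implicit.
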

\begin{proof}
Let $G$ and $H$ admit a quasi neighborhood  balanced colorings $g$ and $h$ respectively. We define a coloring on the direct product $G \times H$ as follows.

Fix a vertex $v \in V(H)$ and consider any vertex $(u,v) \in V(G \times H)$. If $v$ is colored red under $h$, then we color the entire $G$-layer $G_v$ using $g$. If $v$ is colored blue, we color the layer $G_v$ using the complementary coloring $\overline{g}$. In this way, every vertex of $G \times H$ receives a color.

We now show that this coloring is a \qnbc of $G \times H$.

Let $(u,v)$ be any vertex in $G \times H$. Suppose that, under the coloring $g$ of $G$, the vertex $u$ has exactly one more red neighbor than blue. Then in the product $G \times H$, the vertex $(u,v)$ will have one more red neighbor than blue in precisely those layers $G_{v'}$ where $v'$ is a neighbor of $v$ in $H$ that is colored red (under $h$). Likewise, $(u,v)$ will have one more blue neighbor than red in the layers corresponding to neighbors $v'$ of $v$ in $H$ that are colored blue.

Since $H$ is a \qnbcd graph, for the vertex $v$ the difference between the number of red and blue neighbors is at most one under the coloring $h$. Consequently, for every vertex $(u,v)$ in $G \times H$ the difference between the number of red and blue neighbors is at most one and thus the defined coloring is a \qnbc of $G \times H$, completing the proof.
\end{proof}
Using the same approach as in the above theorem, we can show that if the graphs $G$ and $H$ are uniform/positive/negative \qnbc, then the graph $G \times H$ is also uniform/positive/negative \qnbc respectively.

The {\it cartesian product}  of $G$ and $H$, denoted by $G\, \square \, H$, is a graph with vertex set $V(G)\times V(H)$ and two vertices $(u,v)$ and $(u',v')$ are adjacent if and only if $u=u'$ and $vv'\in E(H)$  or $v=v'$ and $uu'\in E(G)$.
\begin{theorem}\label{th:nbc_cartesian}
    If $G$ is \qnbcd graph and $H$ is \nbcd graph, then $G\, \square\, H$ is a \qnbcd graph. 
\end{theorem}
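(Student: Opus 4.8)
The plan is to use the same layered colouring that appears in the proofs of Theorems~\ref{th:nbc_direct} and \ref{th:nnbc_direct}, taking advantage of the fact that in $G\,\square\,H$ the neighbourhood of a vertex $(u,v)$ splits cleanly into a \emph{$G$-part}, namely the vertices $(u',v)$ with $u'\in N_G(u)$ lying in the $G$-layer $G_v$, and an \emph{$H$-part}, namely the vertices $(u,v')$ with $v'\in N_H(v)$ lying in the $H$-layer $H_u$. Let $g$ be a \qnbc of $G$ and $h$ an \nbc of $H$. I would colour $G\,\square\,H$ as follows: for each $v\in V(H)$, colour the $G$-layer $G_v$ by $g$ if $h(v)$ is red and by the complementary colouring $\bar g$ if $h(v)$ is blue; equivalently, $(u,v)$ is coloured red exactly when $g(u)$ and $h(v)$ agree.

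Next I would analyse the two parts of the neighbourhood of an arbitrary vertex $(u,v)$ separately. For the $H$-part, the neighbour $(u,v')$ receives colour $g(u)$ if $h(v')$ is red and $\bar g(u)$ otherwise; since $h$ is an \nbc, the vertex $v$ has equally many red and blue neighbours in $H$, so among the $H$-part neighbours of $(u,v)$ the red and blue counts are equal. For the $G$-part, the neighbour $(u',v)$ receives colour $g(u')$ or its complement, the choice being a single global swap that depends only on $h(v)$; since $g$ is a \qnbc, the red and blue counts among $\{g(u'):u'\in N_G(u)\}$ differ by at most one, and a uniform swap preserves this. Adding the two parts, the red and blue neighbour counts of $(u,v)$ differ by at most one.

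To finish I would exhibit a vertex where the difference is exactly one. Since $g$ is a \qnbc, there is a vertex $u_0\in V(G)$ at which $g$ realises an imbalance of exactly one. For every $v\in V(H)$, the $G$-part of the neighbourhood of $(u_0,v)$ then has an imbalance of $\pm1$ (the global swap does not change its magnitude), while the $H$-part contributes $0$; hence $(u_0,v)$ has red and blue neighbour counts differing by exactly one, and the colouring is a \qnbc of $G\,\square\,H$. As a consistency check, $\deg_{G\,\square\,H}(u_0,v)=\deg_G(u_0)+\deg_H(v)$ is odd, since $\deg_G(u_0)$ is odd and $\deg_H(v)$ is even (every vertex of an \nbcd graph has even degree), in agreement with Lemma~\ref{lem:deg_nnbc}.

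The routine calculations are all of the ``count neighbours in each layer'' type and present no difficulty. The point that actually needs to be got right — and the main obstacle in setting up the argument — is the assignment of roles to the two factors: the factor $H$ carrying the \nbc must be the one that decides which $G$-layers are flipped, so that the $H$-part of every neighbourhood balances out exactly and the residual (at most unit) imbalance is inherited solely from the \qnbc of $G$. Reversing the roles fails, because a \qnbc need not balance each individual neighbourhood and so the ``$H$-part'' could introduce an imbalance larger than one.
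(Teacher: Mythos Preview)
Your proof is correct and uses the same layered colouring and the same $G$-part/$H$-part analysis as the paper's own proof. You are in fact more thorough than the paper: you explicitly exhibit a vertex with imbalance exactly one (as the definition of \qnbc requires), a verification the paper's proof omits.
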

\begin{proof}
Let $G$ admit a quasi neighborhood  balanced coloring $g$, and let $H$ admit a neighborhood balanced coloring $h$. We define a coloring on the Cartesian product $G \,\square\, H$ as follows.

Fix a vertex $v \in V(H)$ and consider any vertex $(u,v) \in V(G \,\square\, H)$. If $v$ is colored red under $h$, we color the entire $G$-layer $G_v$ using $g$. If $v$ is colored blue under $h$, we color the layer $G_v$ using the complementary coloring $\overline{g}$. In this way, each $G$-layer is colored using either $g$ or $\overline{g}$, and likewise each $H$-layer is colored using either $h$ or its complement $\overline{h}$.

Since each $G$-layer is colored by $g$ or $\overline{g}$, the difference between the number of red and blue neighbors of $(u,v)$ of the form $(u',v)$ is at most one. Similarly, because every $H$-layer is colored using $h$ or $\overline{h}$, the vertex $(u,v)$ has an equal number of red and blue neighbors of the form $(u,v')$.

Combining these two observations, we conclude that the total difference between the number of red and blue neighbors of $(u,v)$ in $G \,\square\, H$ is at most one. Therefore, the defined coloring is a quasi neighborhood  balanced coloring of $G \, \square\, H$, as required.
\end{proof}
\begin{theorem}
    If $G$ is a positive (negative) \qnbcd graph and $H$ is \nbcd graph, then $G\,\square\, H$ is a positive (negative) \qnbcd graph 
\end{theorem}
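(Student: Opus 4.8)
The plan is to recycle the construction from the proof of Theorem~\ref{th:nbc_cartesian} and then verify that it carries the stronger ``positive'' (resp. ``negative'') label. Starting from a positive \qnbc $g$ of $G$ and a \nbc $h$ of $H$, I would color each $G$-layer $G_v$ of $G\,\square\,H$ by $g$ whenever $h(v)$ is red and by the complementary coloring $\overline{g}$ whenever $h(v)$ is blue. Theorem~\ref{th:nbc_cartesian} already tells us that this is a \qnbc of $G\,\square\,H$; the only new thing to prove is that every odd-degree vertex of $G\,\square\,H$ has strictly more neighbors of its \emph{own} color than of the opposite color.

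The crucial observation is that taking the complement of a positive \qnbc again yields a positive \qnbc: flipping a vertex's color together with the colors of all of its neighbors does not change whether it has more neighbors of its own color. (This is exactly why the Cartesian product preserves the positive/negative property, even though it need not preserve the uniform property expressed in terms of a fixed color red or blue.) Hence in the construction above every $G$-layer of $G\,\square\,H$ carries a positive \qnbc of $G$, namely $g$ or $\overline{g}$.

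Next I would determine which vertices of $G\,\square\,H$ have odd degree. Because $h$ is a neighborhood balanced coloring, every vertex of $H$ has equally many red and blue neighbors and hence even degree; therefore $\deg_{G\,\square\,H}(u,v)=\deg_G(u)+\deg_H(v)$ is odd exactly when $\deg_G(u)$ is odd. Fix such a vertex $(u,v)$. As computed in the proof of Theorem~\ref{th:nbc_cartesian}, its neighbors of the form $(u,v')$ split into equally many red and blue ones (since $v$ has equally many red and blue neighbors in $H$), so they contribute nothing to the color imbalance at $(u,v)$, and the entire imbalance comes from the layer $G_v$. Since $G_v$ is colored by a positive \qnbc of $G$ and $u$ has odd degree there, $(u,v)$ has exactly one more neighbor of its own color than of the opposite color inside $G_v$, hence in all of $G\,\square\,H$. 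Finally, Lemma~\ref{lem:deg_nnbc} supplies an odd-degree vertex of $G$, which yields an odd-degree vertex of $G\,\square\,H$, so the ``difference exactly one'' requirement is met and the coloring is a positive \qnbc. The negative case is word-for-word the same, starting from a negative \qnbc of $G$ and noting that complementation preserves negativity as well.

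I do not anticipate a genuine obstacle: the whole argument is bookkeeping built on top of Theorem~\ref{th:nbc_cartesian}. The one point that deserves a careful sentence is the invariance of the positive/negative property under complementation, since this is precisely what separates this theorem from the uniform version.
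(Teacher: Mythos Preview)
Your proposal is correct and follows essentially the same approach as the paper: both reuse the construction from Theorem~\ref{th:nbc_cartesian} and argue that, because each $G$-layer is colored by $g$ or $\overline{g}$ while each $H$-layer contributes no imbalance, every odd-degree vertex ends up with one more neighbor of its own color. Your write-up is in fact a bit more careful than the paper's---you make explicit the complementation invariance of the positive/negative property, the parity analysis showing $\deg_H(v)$ is even, and the appeal to Lemma~\ref{lem:deg_nnbc} for the existence of an odd-degree vertex---points the paper passes over quickly or leaves implicit.
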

\begin{proof}
Let \(G\) admit a positive quasi neighborhood  balanced coloring \(g\), and let
\(H\) admit a neighborhood balanced coloring \(h\).
We color the vertices of \(G \,\square\, H\) in the same manner as described in
Theorem~\ref{th:nbc_cartesian}.

Since \(g\) is a positive \qnbc of \(G\), each vertex of \(G\) has exactly one
more neighbor of its own color than of the opposite color. Moreover, as each
\(G\)-layer is colored using either \(g\) or \(\overline{g}\), any vertex
\((u,v)\) has one more neighbor of the form \((u',v)\) in its own color than in
the other color. Similarly, because each \(H\)-layer is colored using either
\(h\) or \(\overline{h}\), the vertex \((u,v)\) has an equal number of red and
blue neighbors of the form \((u,v')\).

Combining these observations, we conclude that every vertex \((u,v)\) has more
neighbors of its own color than of the opposite color. Hence, the induced
coloring is a positive \qnbc of \(G \,\square\, H\).
\end{proof}
The {\it strong product} of $G$ and $H$, denoted by $G \boxtimes H$, is a graph with vertex set $V(G)\times V(H)$ and two vertices $(u,v)$ and $(u',v')$ are adjacent if and only if $u=u'$ and $vv'\in E(H)$ or $v=v'$ and $uu'\in E(G)$ or $uu'\in E(G)$ and $vv'\in E(H)$. Note that from the definition, it is clear that the strong product is the edge-disjoint union of the cartesian and the direct product, and therefore, we have the following result.
\begin{theorem}\label{th:nnbc_join}
    If $G$ is \qnbcd graph and $H$ is \nbcd graph, then $G\,\boxtimes\, H$ is a \qnbcd graph. 
\end{theorem}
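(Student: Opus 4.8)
The plan is to lean on the two observations already available: $G\boxtimes H$ has vertex set $V(G)\times V(H)$ and its edge set is the disjoint union of $E(G\,\square\,H)$ and $E(G\times H)$, and the proofs of Theorems~\ref{th:nbc_cartesian} and~\ref{th:nbc_direct} build \emph{the very same} coloring of $V(G)\times V(H)$ out of a \qnbc of $G$ and a \nbc of $H$. So I would fix a \qnbc $g$ of $G$ and a \nbc $h$ of $H$ and let $\phi$ be the coloring that colors the $G$-layer $G_v$ by $g$ when $h(v)$ is red and by $\overline{g}$ when $h(v)$ is blue; the claim is that $\phi$ is a \qnbc of $G\boxtimes H$.

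The main step is an additivity observation. For a vertex $x\in V(G)\times V(H)$ and a graph $F$ on this vertex set, let $\mathrm{imb}_F(x)$ denote the number of $\phi$-red neighbors of $x$ in $F$ minus the number of $\phi$-blue neighbors. Because the three graphs share the same vertex set and $E(G\boxtimes H)$ splits as the disjoint union above, the neighborhood of $x$ in $G\boxtimes H$ is the disjoint union of its neighborhoods in $G\,\square\,H$ and in $G\times H$, whence $\mathrm{imb}_{G\boxtimes H}(x)=\mathrm{imb}_{G\,\square\,H}(x)+\mathrm{imb}_{G\times H}(x)$. Running the proof of Theorem~\ref{th:nbc_direct} with this $g$ and $h$ produces exactly $\phi$ and shows $\phi$ is a \nbc of $G\times H$, so $\mathrm{imb}_{G\times H}(x)=0$ for every $x$; running the proof of Theorem~\ref{th:nbc_cartesian} with the same $g$ and $h$ produces $\phi$ and shows $\phi$ is a \qnbc of $G\,\square\,H$, so $\mathrm{imb}_{G\,\square\,H}(x)\in\{-1,0,1\}$ for every $x$. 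Adding, $|\mathrm{imb}_{G\boxtimes H}(x)|\le 1$ for every vertex $x$.

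For the exactness clause in the definition of a \qnbc, I would take a vertex $u_0$ of $G$ whose $g$-imbalance equals one — such $u_0$ exists because $g$ is a genuine \qnbc of $G$ — and any $v_0\in V(H)$, and note that by the construction $\mathrm{imb}_{G\,\square\,H}(u_0,v_0)=\pm 1$ (the sign depending on whether $G_{v_0}$ was colored by $g$ or $\overline g$, but the magnitude is one), while $\mathrm{imb}_{G\times H}(u_0,v_0)=0$, so $\mathrm{imb}_{G\boxtimes H}(u_0,v_0)=\pm 1$. Thus $(u_0,v_0)$ is a vertex at which the imbalance is exactly one, and $G\boxtimes H$ is a \qnbcd graph.

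I expect the only point that needs genuine care is justifying the additivity — specifically, confirming that Theorems~\ref{th:nbc_cartesian} and~\ref{th:nbc_direct} really are instantiated by the identical coloring $\phi$ (they are, verbatim), so that their conclusions may be summed vertex-by-vertex rather than merely quoted abstractly. If one instead wanted a proof independent of those two theorems, the delicate part would be the direct-product tally: grouping the direct-product neighbors of $(u,v)$ by their second coordinate $v'$, the copy of $G$ sitting over $v'$ is colored by $g$ or by $\overline g$ according to $h(v')$ and therefore contributes either $a$ red and $b$ blue neighbors of $u$, or $b$ red and $a$ blue (where $a,b$ are the $g$-red and $g$-blue neighbor counts of $u$), and the equality of red and blue $H$-neighbors of $v$ makes these contributions cancel; a sign error there is the easiest slip, and everything else is routine.
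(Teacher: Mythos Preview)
Your proposal is correct and is precisely the paper's approach: the paper's proof is the single line ``The result follows from Theorem~\ref{th:nbc_direct} and Theorem~\ref{th:nbc_cartesian},'' and your argument simply spells out the implicit content of that line---that both theorems instantiate the \emph{same} coloring $\phi$, so the imbalances add over the edge-disjoint decomposition $E(G\boxtimes H)=E(G\,\square\,H)\sqcup E(G\times H)$. Your verification of the exactness clause (finding a vertex with imbalance exactly one) is a detail the paper leaves unstated but which your treatment handles cleanly.
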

\begin{proof}
    The result follows from Theorem \ref{th:nbc_direct} and Theorem \ref{th:nbc_cartesian}.
\end{proof}
It is easy to show that if the graph $G$ is uniform/positive/negative \qnbcd graph and $H$ is \nbcd graph, then $G\,\boxtimes\, H$ is a uniform/positive/negative \qnbcd graph.\\

The {\it join of graphs} of $G$ and $H$, denoted by $G\vee H$, is a graph having vertex set $V(G)\,\cup\, V(H)$ and edge set $E(G)\cup E(H)\cup \{xy:  x\in V(G) \text{ and } y \in V(H)\}$.
\begin{theorem}
    If $g$ is a \qnbc of graph $G$   with $|V^g_G(R)|=|V^g_G(B)|$ and $h$ is a \qnbc of graph $H$ with $|V^h_H(R)|=|V^h_H(B)|$, then $G \vee H$ is \qnbcd graph.
\end{theorem}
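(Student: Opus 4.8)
The plan is to equip $G\vee H$ with the ``inherited'' coloring: keep $g$ on the copy of $G$ and $h$ on the copy of $H$, and then check directly that this is a \qnbc. Throughout, for a vertex $x$ of a colored graph write $r(x)$ and $b(x)$ for the numbers of red and blue neighbors of $x$. The first step is the neighbor-count identity coming from the definition of the join: every vertex $u\in V(G)$ is adjacent in $G\vee H$ to its $G$-neighbors \emph{and} to all of $V(H)$, so in $G\vee H$ it has $r_G(u)+|V^h_H(R)|$ red neighbors and $b_G(u)+|V^h_H(B)|$ blue neighbors. By hypothesis $|V^h_H(R)|=|V^h_H(B)|$, so these cross edges contribute equally and the imbalance of $u$ in $G\vee H$ equals its imbalance under $g$ in $G$; since $g$ is a \qnbc this is at most one. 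The symmetric argument, with the roles of $G$ and $H$ swapped and using $|V^g_G(R)|=|V^g_G(B)|$, shows that every $v\in V(H)$ has imbalance at most one in $G\vee H$.

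The second step is to produce the vertex whose imbalance is exactly one, which is the extra clause in the definition of \qnbc. Since $g$ is already a \qnbc of $G$, there is some $u_0\in V(G)$ with $|r_G(u_0)-b_G(u_0)|=1$; by the identity from the first step this same vertex has imbalance exactly one in $G\vee H$. Hence the inherited coloring satisfies both requirements of the definition, so $G\vee H$ is a \qnbcd graph.

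I do not expect a genuine obstacle: the entire content of the argument is the observation that the two balance hypotheses $|V^g_G(R)|=|V^g_G(B)|$ and $|V^h_H(R)|=|V^h_H(B)|$ are precisely what makes the complete bipartite ``glue'' between $G$ and $H$ invisible to every vertex's red/blue count, so that the imbalances guaranteed by $g$ and $h$ survive unchanged into $G\vee H$. (It is worth noting in passing that these hypotheses force $|V(G)|$ and $|V(H)|$ to be even, so no parity mismatch arises.) Finally, since the imbalance of each vertex --- and in particular the colour in which any odd imbalance occurs --- is unaffected by passing to $G\vee H$, the same proof shows that if $g$ and $h$ are both positive, both negative, or both uniform with excess toward the same colour, then the inherited coloring of $G\vee H$ is of that same type.
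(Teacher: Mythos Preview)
Your proof is correct and follows essentially the same approach as the paper: define the inherited coloring and use the equal-class-size hypotheses to show that the cross edges contribute equally to every vertex's red and blue counts, so each vertex's imbalance is inherited from $g$ or $h$. You are in fact slightly more careful than the paper, since you explicitly verify the ``at least one vertex with imbalance exactly one'' clause of the \qnbc definition (by carrying over a witness $u_0$ from $G$), a point the paper's proof leaves implicit.
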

\begin{proof}
In the join $G \vee H$, color the vertices of $G$ according to $g$ and the vertices of $H$ according to $h$.

Since $g$ is a \qnbc of $G$, for every vertex of $G$ the difference between the number of red and blue neighbors is at most one when considering only edges within $G$. Moreover, because $|V^h_H(R)| = |V^h_H(B)|$, each vertex of $G$ receives an equal number of red and blue neighbors from $H$. Thus, the difference between the number of red and blue neighbors for each vertex of $G$ remains at most one in $G \vee H$.

Similarly, since $|V^g_G(R)| = |V^g_G(B)|$, every vertex of $H$ receives an equal number of red and blue neighbors from $G$. Combined with the fact that $h$ is a \qnbc of $H$, the difference between the number of red and blue neighbors for each vertex of $H$ is also at most one in $G \vee H$.

Hence $G \vee H$ is a \qnbcd graph.
\end{proof}
Using the same approach as in the above theorem, we can show that if the graphs $G$ and $H$ admit a positive/negative \qnbc then $G\vee H$ also admits positive/negative \qnbc. Further, we can also show that, if the graphs $G$ and $H$ admit uniform \qnbc with the same dominating color, then $G\vee H$ also admits uniform \qnbc.

The following results on $G\vee H$ follow easily from arguments as presented in Theorem \ref{th:nnbc_join}. We therefore omit their proofs.
\begin{theorem}
    If $g$ is a \nbc of graph $G$ with $|V^g_G(R)|=|V^g_G(B)|+1$ and $h$ is a \nbc of graph $H$ with $|V^h_H(R)|=|V^h_H(B)|$, then $G\vee H$ is an uniform \qnbcd graph.
\end{theorem}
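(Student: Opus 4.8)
The plan is to use the obvious coloring: color the vertices of $G$ according to $g$ and the vertices of $H$ according to $h$, and then verify directly that this yields a uniform \qnbc of $G \vee H$. The only edges of $G \vee H$ not already present in $G$ or in $H$ are the complete bipartite edges joining every vertex of $G$ to every vertex of $H$; consequently the neighborhood of a vertex $u \in V(G)$ in $G \vee H$ is its $G$-neighborhood together with all of $V(H)$, and symmetrically the neighborhood of $v \in V(H)$ is its $H$-neighborhood together with all of $V(G)$. All that remains is to count red and blue neighbors in each case.

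For a vertex $u \in V(G)$: since $g$ is a \nbc, $u$ has equally many red and blue neighbors inside $G$, and the extra neighbors coming from $V(H)$ contribute $|V^h_H(R)|$ red and $|V^h_H(B)|$ blue vertices, which are equal by hypothesis. Hence every vertex of $G$ has exactly the same number of red and blue neighbors in $G \vee H$, i.e.\ imbalance $0$. For a vertex $v \in V(H)$: since $h$ is a \nbc, $v$ has equally many red and blue neighbors inside $H$, while the extra neighbors from $V(G)$ contribute $|V^g_G(R)| = |V^g_G(B)|+1$ red and $|V^g_G(B)|$ blue vertices. Thus every vertex of $H$ has exactly one more red neighbor than blue neighbor in $G \vee H$.

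Since $V(H) \neq \emptyset$, at least one vertex attains imbalance exactly $1$, so the coloring is genuinely a \qnbc (not merely a \nbc); and since the unique excess neighbor is red for every such vertex, the coloring is uniform. The last bit of bookkeeping is to confirm that the vertices carrying the imbalance are precisely the odd-degree vertices, as the definition of uniform \qnbc demands: a \nbc forces every vertex of $G$ and of $H$ to have even degree in its own graph, and the hypotheses force $|V(H)|$ even and $|V(G)|$ odd, so in $G \vee H$ every vertex of $G$ has even degree while every vertex of $H$ has odd degree — matching the imbalances $0$ and $1$ computed above.

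I do not anticipate a genuine obstacle: the whole argument is a direct neighbor count identical in spirit to the earlier join theorems. The only point requiring a moment's care is recognizing that it is exactly the nonemptiness of $H$ (equivalently, the presence of the odd-degree vertices it contributes to $G \vee H$) that upgrades the balanced coloring to a proper \emph{quasi} balanced coloring, and that the asymmetry $|V^g_G(R)| = |V^g_G(B)|+1$ is precisely what fixes red as the uniform dominating color.
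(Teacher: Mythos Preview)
Your proposal is correct and follows exactly the approach the paper intends: the paper omits the proof, stating only that it ``follows easily from arguments as presented in Theorem~\ref{th:nnbc_join},'' which is precisely the color-$G$-by-$g$, color-$H$-by-$h$, and count-neighbors argument you carry out. Your additional bookkeeping verifying that the odd-degree vertices of $G\vee H$ are exactly those in $V(H)$ is a nice touch that the paper does not make explicit.
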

\begin{theorem}
    If $g$ is a \qnbc of graph $G$ with $|V_G^g(R)|=|V_G^g(B)|+1$ and $h$ is a \nbc of graph $H$ with $|V_H^h(R)|=|V_H^h(B)|$, then $G\vee H$ is a \qnbcd graph.   
\end{theorem}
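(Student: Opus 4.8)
The plan is to reuse the coloring strategy of Theorem~\ref{th:nnbc_join}: inside $G\vee H$, color the vertices of $G$ according to $g$ and the vertices of $H$ according to $h$, and then check the \qnbc condition vertex by vertex, using the two cardinality hypotheses to control the edges between the two sides of the join.

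First I would treat a vertex $v\in V(G)$. Its neighbors in $G\vee H$ are exactly its neighbors within $G$ together with all of $V(H)$. Since $g$ is a \qnbc of $G$, the numbers of red and blue neighbors of $v$ among its $G$-neighbors differ by at most one; and since $|V_H^h(R)|=|V_H^h(B)|$, the set $V(H)$ contributes equally many red and blue neighbors. Adding the two contributions, the red and blue neighbor counts of $v$ in $G\vee H$ differ by at most one.

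Next I would treat a vertex $w\in V(H)$, whose neighbors in $G\vee H$ are its neighbors within $H$ together with all of $V(G)$. Because $h$ is a \nbc of $H$, the neighbors of $w$ within $H$ split evenly between the two colors; and because $|V_G^g(R)|=|V_G^g(B)|+1$, the set $V(G)$ contributes exactly one more red neighbor than blue. Hence the red and blue neighbor counts of $w$ differ by \emph{exactly} one.

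Finally I would combine these observations: every vertex of $G\vee H$ has red and blue neighbor counts differing by at most one, and (assuming $H$ has at least one vertex) every vertex of $H$ achieves the difference exactly one, so the ``at least one vertex'' clause in the definition of \qnbc is satisfied. Thus the coloring described is a \qnbc, and $G\vee H$ is \qnbcd. I do not expect a genuine obstacle here; the only subtlety worth flagging is why this yields merely a \qnbc rather than a uniform, positive, or negative one: since $g$ is only assumed to be a \qnbc, the at-most-unit imbalance at the vertices of $G$ may point toward either color depending on $g$, so the ``extra'' neighbor need not have a consistent color across all odd-degree vertices of $G\vee H$, which is exactly why the statement claims only that $G\vee H$ is \qnbcd.
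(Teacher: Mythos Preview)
Your proposal is correct and follows exactly the approach the paper indicates: it explicitly omits the proof, noting that it follows from the same argument as in Theorem~\ref{th:nnbc_join}, which is precisely the coloring and vertex-by-vertex check you carry out. Your additional remark that the vertices of $H$ witness the ``exactly one'' clause, and your explanation of why only a \qnbc (rather than a uniform/positive/negative variant) can be claimed, are both sound refinements.
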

\subsection{\small Non-hereditary property of the class of \qnbcd graphs}

A family $\mathcal{F}$ of graphs is hereditary if each induced subgraph of every graph in $\mathcal{F}$ is also in $\mathcal{F}$.
Further, it is known that the hereditary classes can be characterized by providing a list of forbidden induced subgraphs.
Next, we show that the class of \qnbcd graphs is not hereditary.

\begin{theorem}\label{th:heredity}
    The class of \qnbcd graphs is not hereditary. 
\end{theorem}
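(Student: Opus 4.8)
The plan is to prove non-heredity in the most direct way possible: exhibit a single graph $G$ that is \qnbcd together with an induced subgraph $G'$ of $G$ that is \emph{not} \qnbcd. Since a hereditary family must contain every induced subgraph of each of its members, any such pair $(G,G')$ witnesses that the class of \qnbcd graphs is not hereditary.

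The first step is to identify a convenient ``forbidden'' induced subgraph. By Lemma~\ref{lem:deg_nnbc}, every \qnbcd graph has a vertex of odd degree; equivalently, in a graph all of whose vertices have even degree no vertex can ever have its number of red neighbors and its number of blue neighbors differ by exactly one, so such a graph admits no \qnbc at all. In particular the triangle $K_3$, being $2$-regular, is not \qnbcd. This is the only place a parity observation is needed, and it is immediate from the lemma, so there is no real obstacle here.

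The second step is to embed $K_3$ as an induced subgraph of a \qnbcd graph. I would take $G=K_4$: removing any single vertex of $K_4$ leaves exactly the induced subgraph $K_3$, and $K_4$ itself is \qnbcd. Indeed, colouring two of its vertices red and the other two blue gives every vertex one neighbour of its own colour and two neighbours of the opposite colour, a difference of exactly one at every vertex; this is precisely the (negative) \qnbc of $K_4$ established above in the characterisation of complete graphs. Thus $K_4$ lies in the class while its induced subgraph $K_3$ does not, completing the proof.

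Finally, I would remark that the argument is robust: the same reasoning applies to any even complete graph $K_{2m}$ together with its induced $K_{2m-1}$, and more generally to any \qnbcd graph possessing a vertex whose deletion leaves all remaining degrees even. Hence the class fails heredity in a strong and easily produced way, which is exactly the phenomenon behind the assertion that there is no forbidden induced subgraph characterisation for \qnbcd graphs.
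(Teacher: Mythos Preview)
Your proof is correct: the pair $(K_4,K_3)$ is a valid witness to non-heredity, since $K_4$ admits a (negative) \qnbc while $K_3$, having no vertex of odd degree, admits none by Lemma~\ref{lem:deg_nnbc}.

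Your route, however, differs from the paper's. The paper does not give a single counterexample but instead proves the stronger structural statement that \emph{every} graph $G$ occurs as an induced subgraph of some \qnbcd graph $H$: starting from $G$ on $\{v_1,\dots,v_n\}$, it builds $H$ on three ``layers'' $\{v_i^1\},\{v_i^2\},\{v_i^3\}$, with two copies of $G$ on the first two layers plus cross-edges and a pendant matching to the third layer, and then exhibits an explicit \qnbc of $H$. Your argument is quicker and entirely elementary, and it already suffices for the theorem as stated (and hence for the absence of a forbidden induced subgraph characterisation, since heredity is equivalent to having such a characterisation). The paper's argument buys more: it shows that the class is as far from hereditary as possible, in that no graph whatsoever is excluded as an induced subgraph of a \qnbcd graph. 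Your closing remark gestures at this phenomenon but your examples (the pairs $(K_{2m},K_{2m-1})$) do not establish it; only the paper's universal construction does.
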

\begin{proof}
To prove the theorem, it is enough to show that every graph is an induced subgraph of a \qnbcd graph.

Let $G$ be a graph with $V(G)=\{v_1,v_2,\dots,v_n\}$. Construct a graph $H$ from $G$ with the vertex set $V(H)=\cup_{j=1}^3\{v_i^j;\ 1\leq i \leq n\}$ and the edge set $$E(G)=\big(\cup_{p=1}^2\{v_i^pv_j^p;\ v_iv_j\in E(G)\}\big)\cup \{v_i^1v_j^2;\ v_iv_j\in E(G) \}\cup \{v_i^2v_i^3;\ 1\leq i \leq n \}.$$ 
It is easy to see that $G$ is an induced subgraph of $H$. In $H$ for $1\leq i \leq n$, color the vertices $v_i^1$  using color red, the vertices $v_i^2$ using color blue and the vertices $v_i^3$ again using color blue. 

Note that the vertices $v_i^1$ have an equal number of neighbors of both colors. The vertices $v_i^2$ have one more blue neighbor compared to red, while the vertices $v_i^3$ also have one more blue neighbor compared to red. Hence, the graph $H$ is a \qnbcd graph.
\end{proof}

\section{Hardness Results} \label{sec:hardness}
We formally state the decision problem whether a given graph $G$ admits a \qnbc.

\vspace{0.2cm}
\noindent\fbox{%
  \begin{minipage}{\linewidth}
    {\textbf{\textsc{Quasi  Neighborhood Balanced Coloring}}}\\
    \textbf{Input:} A graph $G$. \hfill\\
    \textbf{Question:} Is there a vertex coloring of $G$ using colors red and blue, such that for any vertex, the difference between the number of red and blue neighbors is at most one, and there is at least one vertex where this difference is exactly one. 
  \end{minipage}
}

\vspace{0.3cm}

Our reduction relies on the \nbc problem, whose \NP-completeness is established in \cite{knbc}.

\vspace{0.2cm}
\noindent\fbox{
  \begin{minipage}{\linewidth}
    {\textbf{\textsc{Neighborhood Balanced Coloring Problem}}} \hfill\\
    \textbf{Input:} A graph $G$.\\[3pt]
    \textbf{Question:} Is there a coloring of $G$ using colors red and blue, such that for any vertex, there are an equal number of neighbors of both colors.
  \end{minipage}
}

\vspace{0.3cm}
\begin{theorem}
    The Quasi  Neighborhood Balanced Coloring problem is \NP-complete.
\end{theorem}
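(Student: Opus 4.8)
The plan is to prove membership in \NP\ and then give a polynomial-time many-one reduction from the \textsc{Neighborhood Balanced Coloring} problem, whose \NP-completeness is assumed from \cite{knbc}. Membership in \NP\ is immediate: a red/blue coloring of $V(G)$ is a certificate of size $O(|V(G)|)$, and in time $O(|V(G)|+|E(G)|)$ one checks that at every vertex the numbers of red and blue neighbors differ by at most one and that some vertex realizes difference exactly one.

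The core of the hardness argument is a small refinement of Lemma~\ref{lem:deg_nnbc} that I would state first: a graph $G$ is \qnbcd\ if and only if (i) $G$ has a vertex of odd degree and (ii) $G$ admits a red/blue coloring in which, at every vertex, the numbers of red and blue neighbors differ by at most one. The point is that in any coloring as in (ii) the difference at a vertex $v$ has the same parity as $\deg_G(v)$, so every odd-degree vertex is automatically forced to have difference exactly one; thus the extra ``there exists a vertex with difference one'' clause of the definition comes for free as soon as an odd-degree vertex is present. In particular, if every vertex of $G$ has even degree, then (ii) forces every difference to be zero, so (ii) holds precisely when $G$ is \nbcd.

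With this in hand I would define the reduction $G\mapsto G'$ as follows: if $G$ has a vertex of odd degree, let $G'$ be a single vertex $K_1$; otherwise let $G'$ be the disjoint union of $G$ with one extra edge $K_2$. This is clearly computable in polynomial time. In the first case $G$ is not \nbcd\ (any neighborhood balanced coloring forces all degrees even) and $G'$ is not \qnbcd\ (it has no odd-degree vertex), so ``$G'$ is \qnbcd\ iff $G$ is \nbcd'' holds vacuously. In the second case, if $G$ is \nbcd\ then extending a neighborhood balanced coloring of $G$ by coloring the two new $K_2$-vertices with distinct colors yields a coloring in which every vertex of $G$ has difference zero and each new vertex has difference one, so $G'$ is \qnbcd; conversely a \qnbc\ of $G'$ restricts on $G$ to a coloring with all differences at most one, and since all degrees of $G$ are even these differences are all zero, so $G$ is \nbcd. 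Hence $G'$ is \qnbcd\ iff $G$ is \nbcd, which completes the reduction.

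I expect the only delicate point to be the case distinction rather than any hard calculation: a graph with an odd-degree vertex can never be \nbcd, yet a careless gadget could accidentally make it \qnbcd\ (the ``differ by at most one'' condition alone is easy to satisfy), so such instances must be routed to a fixed non-\qnbcd\ graph. The parity observation above is then exactly what is needed to see that, in the even-degree case, merely adjoining a $K_2$ supplies the required difference-one vertex without disturbing the neighborhood balanced condition on the rest of $G$.
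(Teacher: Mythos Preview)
Your argument is correct. The parity observation---that in any two-coloring with all local imbalances at most one, every odd-degree vertex automatically has imbalance exactly one and every even-degree vertex has imbalance zero---is exactly the structural fact needed, and you state and use it cleanly. Membership in \NP, the reduction, and the case analysis are all sound.

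Your construction differs from the paper's. The paper attaches a new degree-two vertex $v_e$ to both endpoints of a chosen edge $e$ of $G$, thereby creating the required odd-degree vertices \emph{inside} $G$, and then argues that deleting $v_e$ from any \qnbc of $G'$ leaves a \nbc of $G$. You instead branch on whether $G$ already has an odd-degree vertex (routing such $G$ to the fixed no-instance $K_1$) and otherwise adjoin a disjoint $K_2$. Your approach is more modular: since the $K_2$ is disconnected from $G$, the equivalence in the all-even case is an immediate consequence of your parity lemma, and the odd-degree case needs no gadget at all. The paper's construction has the minor advantage of preserving connectedness (useful if one later wants hardness on connected inputs), but its converse direction requires more care about how the color of $v_e$ interacts with the imbalances at $a_e$ and $b_e$; your disjoint-component trick sidesteps that entirely.
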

   \begin{proof}
We give a reduction from the neighborhood balanced coloring  problem.  
Let $G$ be a graph that admits a \nbc. We construct from $G$ a graph $G'$ that admits a quasi  neighborhood balanced coloring.

Let $e$ be any red--blue edge of $G$ whose endpoints are $a_e$ and $b_e$.  
Introduce a new vertex $v_e$ and make it adjacent to both $a_e$ and $b_e$.  
Assign the color red to $v_e$.  
Denote the resulting graph by $G'$.

In $G'$, every vertex of $G$ other than $a_e$ and $b_e$ still has an equal number of red and blue neighbors, exactly as in the original \nbc of $G$.  
However, the vertices $a_e$ and $b_e$ each acquire one additional red neighbor, namely $v_e$.  
Thus each of them has red--blue neighbor difference exactly one, while all other vertices have difference zero.  
Hence $G'$ is a nearly neighborhood balanced colorable graph.

\medskip
Conversely, suppose that $G'$ is \qnbc.  
Observe that the only vertices of odd degree in $G'$ are $a_e$ and $b_e$, and both are adjacent solely to the added vertex $v_e$ through which this imbalance occurs.  
Since $a_e$ and $b_e$ each differ by exactly one in their counts of red and blue neighbors, and both obtain this imbalance from the same vertex $v_e$, deleting $v_e$ restores balance at both vertices.

Remove $v_e$ from $G'$ to obtain the graph $G$.  
All vertices of $G$ then have equal numbers of red and blue neighbors, establishing that $G$ admits a neighborhood balanced coloring.
\end{proof}




\bibliography{bibliography.bib}

\end{document}